\newcommand{\Aut}{\operatorname{Aut}}
\journal{???}
\newtheorem{theorem}{Theorem}
\newtheorem{lemma}[theorem]{Lemma}
\newtheorem{conjecture}[theorem]{Conjecture}
\newtheorem{observation}[theorem]{Observation}
\newtheorem{definition}[theorem]{Definition}
\def\ps@pprintTitle{%
 \let\@oddhead\@empty
 \let\@evenhead\@empty
 \def\@oddfoot{\centerline{\thepage}}%
 \let\@evenfoot\@oddfoot}
\begin{document}

\begin{frontmatter}

\title{Generation of cubic graphs and snarks with large girth}

\author[gent]{Gunnar Brinkmann}
\ead{Gunnar.Brinkmann@UGent.be}

\author[gent]{Jan Goedgebeur\fnref{fwo}}
\ead{Jan.Goedgebeur@UGent.be}

\address[gent]{Department of Applied Mathematics, Computer Science \& Statistics\\
  Ghent University\\
Krijgslaan 281-S9, \\9000 Ghent, Belgium\\}

\fntext[fwo]{Supported by a Postdoctoral Fellowship of the Research Foundation Flanders (FWO).}

\begin{abstract}
We describe two new algorithms for the generation of all
non-isomorphic cubic graphs with girth at least $k\ge 5$
which are very efficient for $5\le k \le 7$
and show how these algorithms can be efficiently restricted
to generate snarks with girth at least $k$.

Our implementation of these algorithms is more than $30$, respectively  $40$ times
faster than the previously fastest generator for cubic graphs with
girth at least $6$ and $7$, respectively.

Using these generators we have also generated all non-isomorphic
snarks with girth at least $6$ up to $38$ vertices and show that
there are no snarks with girth at least $7$ up to $42$ vertices.  
We present and analyse the new
list of snarks with girth 6. 

\end{abstract}

\begin{keyword}
cubic graph \sep snark \sep girth  \sep chromatic index \sep exhaustive generation 
\end{keyword}

\end{frontmatter}


\section{Introduction}
\label{section:intro}

A cubic (or $3$-regular) graph is a graph where every vertex has degree
3. Cubic graphs have interesting applications in chemistry as they can
be used to represent molecules (where the vertices are e.g.\ carbon
atoms such as in fullerenes~\cite{kroto_85}).  Cubic graphs are also
especially interesting in mathematics since for many open problems in
graph theory it has been proven that cubic graphs are the smallest
possible potential counterexamples, i.e.\ that if the conjecture is
false, the smallest counterexample must be a cubic graph. For examples, see~\cite{snark-paper}.

For most problems the possible counterexamples can be further
restricted to the subclass of cubic graphs which are not
$3$-edge-colourable.  Note that by Vizing's classical theorem the
\textit{chromatic index} (i.e.\ the minimum number of colours required
for a proper edge-colouring of a given graph) of a cubic graph must be
3 or 4. Therefore cubic graphs with chromatic index 3 are often called
\textit{colourable} and those with chromatic index 4
\textit{uncolourable}. For most conjectures uncolourable cubic graphs
with low connectivity or girth (i.e.\ the length of the shortest cycle
of the graph) which are counterexamples can be reduced to
smaller counterexamples by certain standard operations. So in order to
avoid these trivial cases, the class of potential minimal
counterexamples is usually further restricted to the class of
\textit{snarks}: cyclically 4-edge-connected uncolourable cubic graphs
with girth at least 5. (Recall that a graph is called
\textit{cyclically $k$-edge-connected} if the deletion of fewer than
$k$ edges does not disconnect the graph into components which each
contain at least one cycle).

Several algorithms have already been developed for the generation of
complete lists of cubic graphs and it can be considered as a benchmark
problem in structure enumeration. The first complete lists of cubic
graphs were already determined at the end of the 19th century by de
Vries who determined all cubic graphs up to 10
vertices~\cite{de_vries_1889, de_vries_1891}.

The first computer approach dates from 1966 and was done by Ballaban
who generated all cubic graphs up to 12 vertices~\cite{balaban_66}. In
the following decades several other algorithms for generating all
cubic graphs have been proposed, each of them faster than the previous
algorithm. For a more detailed overview of the history of the
generation of cubic graphs, we refer to~\cite{brinkmann2013history}.

The fastest algorithm up until recently was developed by the first
author in 1992~\cite{brinkmann_96} and it has been implemented in a
program called \textit{minibaum}. At that time it could be used to
generate all (connected) cubic graphs up to 24 vertices and in the
meantime (when faster computers became available) it has already been
used to generate all cubic graphs up to 30 vertices.

In 1999 Meringer~\cite{meringer_99} published an efficient algorithm
for the generation of general regular graphs (so not just $3$-regular
graphs). This generator has been implemented in a program called
\textit{genreg}, but it is slower than \textit{minibaum} for
generating $3$-regular graphs.

Together with McKay~\cite{brinkmann_11} we presented a new
construction algorithm for cubic graphs in 2011. This algorithm has
been implemented in a program called \textit{snarkhunter} and is
significantly faster than \textit{minibaum}.

The generation algorithm of \textit{snarkhunter} was also adapted to
efficiently restrict the generation to cubic graphs with girth at
least 4 or 5. Also in these cases snarkhunter was faster than
\textit{minibaum}. The bounding criteria incorporated in
\textit{snarkhunter} can in principle also be applied to generate
cubic graphs with larger lower bounds on the girth, but this pruning
technique is likely to become much less efficient for larger girth.

Before~\cite{snark-paper}, the only way to generate complete lists of snarks was
to use a generator for all cubic graphs with girth at least $5$ and
apply a filter for colourability and cyclic edge-connectivity. However
as only a tiny fraction of the cubic graphs is uncolourable
(e.g.\ only 0.00008\% of the cubic graphs with girth at least $5$ and $32$ vertices are
snarks), this is certainly not an efficient approach.

The generation algorithm of \textit{snarkhunter} allowed a look-ahead
which in many cases avoids the generation of cubic graphs which are
$3$-edge-colourable. By using this specialised generation algorithm for
snarks, together with H{\"a}gglund and
Markstr{\"o}m~\cite{snark-paper} we were able to generate all snarks
up to $36$ vertices. Using these new lists of snarks, we tested several
open conjectures and were able to refute 8 of them. So this shows that
snarks are not only theoretically a good source for counterexamples to
conjectures, but also in practice.

For several conjectures the girth condition for possible minimal
counterexamples has been strengthened (e.g.\ for the cycle double cover
conjecture Huck~\cite{huck2000reducible} has even proven that a smallest 
possible counterexample must be a snark with girth at least
$12$). Furthermore even though only a tiny
fraction of the cubic graphs are snarks, there are already $60~167~732$
snarks with $36$ vertices and testing conjectures on all of these snarks
can be computationally very expensive or even infeasible. So it would
be interesting to consider more restricted subclasses of snarks that
still have the property that smallest
possible counterexamples for a lot of conjectures are contained in it. Therefore the main
focus of this paper is the generation of cubic graphs and snarks with
girth at least $6$ or $7$.

In Section~\ref{section:generation_algorithm} we describe two new
algorithms for the generation of all non-isomorphic
(connected) cubic graphs with girth at least $k\ge 5$ which are very efficient for
$5\le k\le 7$. In the
same section we also show how both algorithms can be extended with
look-aheads which often allow to avoid the generation of
$3$-edge-colourable graphs such that snarks with girth at least $k$ can
be generated more efficiently.

In Section~\ref{subsect:running_times} we report the results and running times of
our implementation of these algorithms for $5\le k\le 7$. Our
implementation is more than $30$, respectively $40$ times faster than the
previously fastest generator for cubic graphs with girth at least $6$
and $7$, respectively. For $k=5$ the algorithm which we developed
earlier jointly with McKay in~\cite{brinkmann_11} was already very
efficient, so in this case the speedup of the new algorithm is smaller (about $40$\%).

The generation algorithms can also be applied for $k>7$,  
but are less efficient for these cases. Therefore we only implemented these algorithms for $k \le 7$. For the generation of 
cubic graphs with girth $k\ge 9$
the fastest method is that of McKay, Myrvold and
Nadon~\cite{mckay1998fast}.

In~\cite{snark-paper} it was already determined that there are only 3
snarks with girth at least $6$ up to $36$ vertices. The new generator
enabled us to generate all non-isomorphic snarks with girth at least
$6$ and $38$ vertices. There are exactly 39 such snarks and in
Section~\ref{subsect:results_snarks} we present and analyse these
snarks.

We also generated a sample of snarks (i.e.\ girth at least $5$) with
$38$ vertices and a sample of snarks with girth at least $6$ and $40$
vertices. In these cases it was computationally infeasible to
generate the complete sets. In Section~\ref{subsect:results_snarks} we also describe which conjectures we tested on these new lists of snarks.

Jaeger and Swart conjectured that all snarks have girth at most
6~\cite{JS1980}. Kochol disproved this conjecture by giving a method
for constructing snarks with arbitrarily large
girth~\cite{kochol1996snarks}. However the smallest snark with girth
$7$ is currently still unknown. The smallest snark with girth $7$
obtained by Kochol's method has 298 vertices, but there is no reason
to assume that this is the smallest snark with girth $7$.

Using the new generators, we show that there are no snarks with girth at least $7$ up to $42$ vertices.

\section{Generation algorithms}
\label{section:generation_algorithm}

\subsection{Introduction}
\label{subsection:intro_algo}

An \textit{insertion operation} or \textit{expansion} is an operation
which constructs a larger graph from a given graph. We call the
inverse operation a \textit{reduction}.

Figure~\ref{fig:edgeinsert_old} shows the main construction operation
used in~\cite{brinkmann_11}. Note that this operation adds 2 new
vertices. As already mentioned in~\cite{brinkmann_11}, this algorithm
can easily be equipped with look-aheads to generate graphs with a
given lower bound $k$ on the girth efficiently. This was implemented
in the program \textit{snarkhunter}~\cite{snarkhunter-site} for
$k=4$ and $5$. For $k>5$ this method would not be efficient.

\begin{figure}[h!t]
	\centering
	\includegraphics[width=0.7\textwidth]{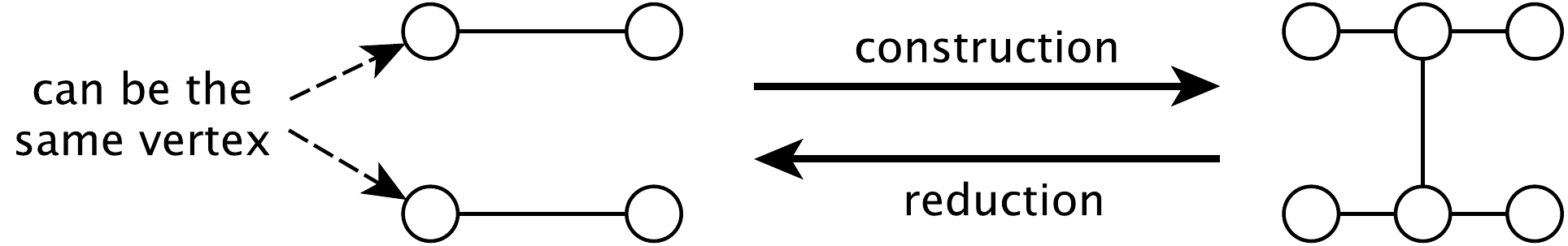}
	\caption{The basic edge insertion operation for cubic graphs used in~\cite{brinkmann_11}.}
	\label{fig:edgeinsert_old}
\end{figure}

As the number of cubic graphs grows very fast (see e.g.~\cite{robinson_83}),
in algorithms which recursively construct cubic graphs from smaller
cubic graphs, the most time consuming step is nearly always the last step.

Therefore in this paper we present two new algorithms which apply a
special operation in the last step.  The first algorithm which is
described in Section~\ref{subsect:tripod} uses an operation that adds
$4$ new vertices. We call it the \textit{tripod operation}. The
second algorithm which is described in Section~\ref{subsect:h-oper}
uses an operation that adds $6$ new vertices and we call it the
\textit{$\mathcal{H}$ operation}.

In both the tripod and $\mathcal{H}$ algorithm we use the canonical
construction path method~\cite{mckay_98} to make sure that only
pairwise non-isomorphic cubic graphs are generated. In order to use
this method we have to define a \textit{canonical reduction} which is
unique up to isomorphism for every graph we want to generate. We call
the expansion that is the inverse of a canonical reduction a
\textit{canonical expansion}. The two rules of the canonical
construction path method are:

\begin{enumerate}

\item Only accept a graph if it was constructed by a canonical expansion.

\item For every graph $G$ to which construction operations are
  applied, only perform one expansion from each equivalence class of
  expansions of $G$.

\end{enumerate}

The pseudocode for a general generation algorithm which uses the canonical
construction path method is shown in
Algorithm~\ref{algo:generate_general_graphs}.

\begin{algorithm}[ht!]
\caption{Construct(graph $G$)}
  \begin{algorithmic}
  \label{algo:generate_general_graphs}
	\IF{$G$ has the desired number of vertices}  
  		\STATE output $G$
  	\ELSE
  		\STATE find expansions
  		\STATE compute equivalence classes of expansions
  		\FOR{each equivalence class}
  			\STATE choose one expansion $X$
			\STATE perform expansion $X$
			\IF{expansion $X$ was canonical}  
				\STATE Construct(expanded graph)
			\ENDIF
                 \STATE perform reduction $X^{-1}$
		\ENDFOR
  	\ENDIF
  \end{algorithmic}
\end{algorithm}

In Section~\ref{subsect:tripod} and~\ref{subsect:h-oper} we will
describe how we apply the canonical construction path method for the
tripod and $\mathcal{H}$ algorithm, respectively and prove that
exactly one representative of every isomorphism class of connected
cubic graphs with girth at least $k$ is generated.

In those sections we will also show how both algorithms can be extended to generate snarks with girth at least $k$ significantly more efficiently than just filtering.

\subsection{Tripod operation}
\label{subsect:tripod}

\subsubsection{Tripod operation for generating cubic graphs}

Figure~\ref{fig:tripod_operation} shows the tripod operation. This
operation subdivides the edges of a set of $3$ different edges 
(which we call an \textit{edge-triple}) and
inserts a $K_{1,3}$ between them. The edges of the edge-triple must be distinct, but they can have common vertices.
This construction operation yields a
cubic graph with $n+4$ vertices when applied to an edge-triple
of a cubic graph with $n$ vertices.

We call the $K_{1,3}$ inserted by the tripod insertion operation a
\textit{tripod} and we call the vertex of degree 3 in the $K_{1,3}$
the \textit{central vertex} of the tripod. The central vertex is
labelled $c$ in Figure~\ref{fig:tripod_operation}. Note that the
central vertex of a tripod uniquely identifies the tripod.

\begin{figure}[h!t]
	\centering
	\includegraphics[width=0.9\textwidth]{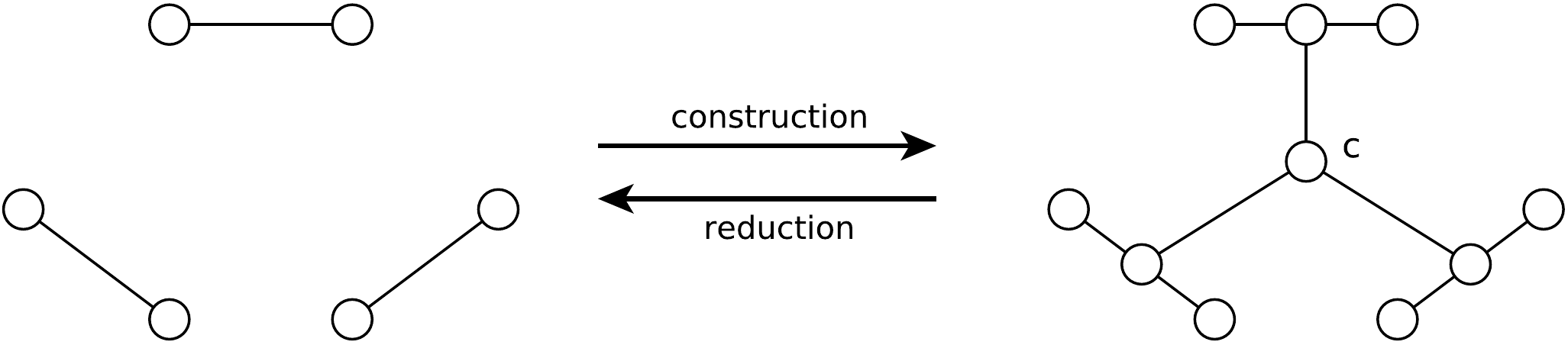}
	\caption{The tripod construction and reduction operation.}
	\label{fig:tripod_operation}
\end{figure}

We call a tripod \textit{reducible} if the central vertex of the tripod is not a cut vertex and each cycle with $i$ endvertices of the tripod has length at least $3+i$.

We
define the \textit{edge-distance} $d_e(e_1,e_2)$ between $2$ edges
as the number of edges in a shortest path
containing $e_1$ and $e_2$. 
The
\textit{minimum edge-triple distance} $d_t(e_1,e_2,e_3)$ of an edge-triple $\{e_1,e_2,e_3\}$ is defined as $\min\{d_e(e_1,e_2),
d_e(e_1,e_3), d_e(e_2,e_3)\}$.
When applying a tripod operation to an edge-triple $\{e_1,e_2,e_3\}$ that inserts a new vertex $v$, $d_e(e_i,e_j)$ is equal to the usual distance $d(v_i,v_j)$ (that is:
the number of edges in a shortest path from $v_i$ to $v_j$) in $G-v$
if $v_i$, respectively $v_j$ denote the vertices subdividing $e_i$ and $e_j$, respectively.

We say that two
subgraphs of a graph are \textit{disjoint} if they have no common
vertices.

\begin{theorem}
\label{theorem:reducible_tripod}
Every connected cubic graph $G$ with girth $k\ge 5$ has a reducible tripod.

The graph obtained by reducing a tripod has girth at least $k-1$ and
at most $3$ disjoint cycles of length $k-1$.
\end{theorem}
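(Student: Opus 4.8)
The plan is to prove the two assertions with one shared tool. Write $c$ for the central vertex of the tripod, let $u_1,u_2,u_3$ be its three neighbours (the endvertices), and for each $i$ let $a_i,b_i$ be the two neighbours of $u_i$ other than $c$; reducing the tripod then deletes $c,u_1,u_2,u_3$ and inserts the three edges $a_ib_i$. First I would record the structural facts that hold for \emph{every} choice of $c$ once $k\ge 5$: each $u_i$ has three distinct neighbours, so $a_i\ne b_i$; $a_ib_i\notin E(G)$, for otherwise $a_iu_ib_i$ is a triangle; and no two endvertices are adjacent and no two share a neighbour, since either would produce a triangle or a $4$-cycle through $c$. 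Consequently the six vertices $a_i,b_i$ are pairwise distinct and distinct from $c,u_1,u_2,u_3$, the three inserted edges are vertex-disjoint and not already present, and so the reduced graph is again simple and cubic for any $c$.

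The shared tool is a \emph{detour bound}. Suppose a cycle $C$ of $G$ avoids $c$ and meets $j\ge 2$ of the endvertices. These $j$ vertices cut $C$ into $j$ arcs; closing the arc between two consecutive endvertices by the two-edge detour through $c$ joining them yields a cycle of $G$ (the detour meets $C$ only at its ends, since $c\notin C$), so girth forces that arc to have length at least $k-2$, whence $|C|\ge j(k-2)$. A cycle meeting a single endvertex and avoiding $c$ has length at least $k$ by girth. I would combine this with the evident correspondence for the reduction: a cycle of the reduced graph using $j$ of the inserted edges lifts, on replacing each $a_ib_i$ by the two-edge path $a_iu_ib_i$, to a cycle of $G$ that avoids $c$, meets exactly those $j$ endvertices, and is longer by exactly $j$; conversely every cycle of the reduced graph either arises this way or uses no inserted edge and is already a cycle of $G$ of length $\ge k$.

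\textbf{Part 1} is then short. A connected graph on at least two vertices has a non-cut vertex (e.g.\ a leaf of a spanning tree), so I choose $c$ to be one of these, and it remains only to verify the length condition $3+i$. A cycle through one endvertex has length $\ge k\ge 4$; a cycle through two has length $\ge k\ge 5$ if it passes through $c$ and $\ge 2(k-2)\ge 6$ by the detour bound otherwise; a cycle through all three has length $\ge 6$ (a $5$-cycle through $c$ and all three endvertices would make two endvertices adjacent, hence a triangle with $c$) and $\ge 3(k-2)\ge 9$ if it avoids $c$. Thus the tripod at $c$ is reducible. For \textbf{Part 2}, a reduced cycle using no inserted edge has length $\ge k$, while one using $j\ge 1$ inserted edges has length $|C|-j$ with $|C|\ge k,\,2k-4,\,3k-6$ for $j=1,2,3$, i.e.\ reduced length $\ge k-1,\,2k-6,\,3k-9$, each of which is at least $k-1$ since $k\ge 5$; hence the girth is at least $k-1$. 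Finally, any $(k-1)$-cycle must use an inserted edge, since a $(k-1)$-cycle using none would be a cycle of $G$ below its girth; as the three inserted edges are vertex-disjoint, pairwise vertex-disjoint $(k-1)$-cycles use disjoint non-empty subsets of them, so there are at most three.

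The step I expect to demand the most care is the detour bound together with the lift, since both rest on showing the objects produced are \emph{genuine, simple} cycles rather than closed walks: that an arc together with its detour does not revisit a vertex, and that substituting the paths $a_iu_ib_i$ into a reduced cycle cannot create a repetition. The preliminary distinctness facts are exactly what rule this out. In particular, although the tripod operation in general permits the three subdivided edges to share endpoints, girth $k\ge 5$ forbids any such coincidence among the $a_i,b_i$ (a shared endpoint would place two endvertices at a common neighbour and yield a $4$-cycle through $c$), so here the inserted edges are genuinely vertex-disjoint. I would also check the arithmetic at $k=5$, the only place where $j=2$ also yields reduced length exactly $k-1$; this is why the number of shortest cycles is governed by the three inserted edges rather than by the case $j=1$ alone.
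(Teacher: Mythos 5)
Your proposal is correct and follows essentially the same route as the paper's proof: a non-cut vertex obtained from a spanning-tree leaf serves as the central vertex, your ``detour bound'' is exactly the paper's observation that any two neighbours of $c$ have distance at least $k-2$ in $G-c$ (giving cycle length at least $\max\{k,(k-2)m\}$ before reduction), and the bound of three disjoint $(k-1)$-cycles comes, as in the paper, from the fact that every such cycle must use one of the three inserted edges. The only difference is one of detail: you explicitly verify the simplicity of the reduced graph, the cycle-lifting correspondence, and the $3+i$ condition for reducibility, all of which the paper asserts without proof.
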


\begin{proof}

Every connected graph has a spanning tree and every tree with more
than one vertex has at least two leaves. So every connected graph with
at least $2$ vertices has at least $2$ vertices which are no cut vertices.

Choose an arbitrary vertex $v$ that is not a cut vertex as the central vertex of a tripod. Since $G$ has girth $k \ge 5$, each cycle with $i$ endvertices of the tripod has length at least $3+i$.
As $G$ has girth $k$, any two neighbours of $v$ have distance
at least $k-2$ in $G-v$, so any cycle in $G-v$ containing $m$ neighbours has
length at least $\max\{k,(k-2)m\}$ leading to a cycle of length at least
$\max\{k-m,(k-3)m\}\ge k-1$ in the reduced graph.

The last point follows immediately as in each cycle of length $k-1$
an edge must have been subdivided by a neighbour of $v$.

\end{proof}

So all connected cubic graphs with girth at least $k$ (for $k \ge 5$)
with $n$ vertices can be obtained by applying the tripod operation to
all possible edge-triples of connected cubic graphs with $n-4$
vertices in such a way that the tripod operation destroys all cycles
of size smaller than $k$ without creating any new cycles of size
smaller than $k$.

The following lemma follows directly from the definition of
the tripod operation.

\begin{lemma}
\label{lemma:tripod_distance}
When applying the tripod expansion operation to the edge-triple
$\{e_1,e_2,e_3\}$ of a graph $G$, the smallest cycle 
containing the central vertex of the
tripod in the expanded graph has size $d_t(e_1,e_2,e_3) +
2$.
\end{lemma}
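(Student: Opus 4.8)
The plan is to prove Lemma~\ref{lemma:tripod_distance} directly from the definition of the tripod operation, tracing how the central vertex $c$ sits in the expanded graph. When we apply the tripod operation to the edge-triple $\{e_1,e_2,e_3\}$, each edge $e_i$ is subdivided by a new vertex $v_i$, and the three vertices $v_1,v_2,v_3$ are joined to a single new central vertex $c$ via the inserted $K_{1,3}$. First I would observe that any cycle through $c$ must enter and leave $c$ along exactly two of the three spokes $cv_i$, say $cv_i$ and $cv_j$ with $i\ne j$. Such a cycle therefore consists of the two spoke-edges $cv_i$, $cv_j$ together with a path from $v_i$ to $v_j$ that avoids $c$; the length of the cycle is $2$ plus the length of that $v_i$--$v_j$ path.

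The key step is then to minimise over the choice of pair $\{i,j\}$ and over the choice of $v_i$--$v_j$ path avoiding $c$. For a fixed pair, the shortest such path has length $d(v_i,v_j)$ computed in the expanded graph with $c$ removed; and the excerpt already records that this quantity equals the edge-distance $d_e(e_i,e_j)$, since deleting $c$ leaves the subdivision vertices $v_1,v_2,v_3$ sitting on the original edges, so a shortest $v_i$--$v_j$ path in the reduced graph corresponds exactly to a shortest path realising $d_e(e_i,e_j)$. Minimising $2+d_e(e_i,e_j)$ over the three pairs gives $2+\min\{d_e(e_1,e_2),d_e(e_1,e_3),d_e(e_2,e_3)\}=2+d_t(e_1,e_2,e_3)$, which is the claimed value.

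The one point requiring a little care — and the place I would expect to be the main obstacle — is justifying that a shortest cycle through $c$ really does use only two distinct spokes and that the path realising $d_e(e_i,e_j)$ can be taken to avoid $c$, rather than accidentally being shortened by routing through $c$ itself. This is where I would verify that passing through $c$ cannot help: any detour through $c$ uses two spoke-edges and so adds at least $2$ to the length without shortening the remaining connection, so an optimal $v_i$--$v_j$ path avoids $c$ and the distance in the reduced graph is the relevant one. Assembling these observations gives that the smallest cycle containing $c$ has size exactly $d_t(e_1,e_2,e_3)+2$, completing the proof.
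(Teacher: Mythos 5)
Your proof is correct and takes essentially the same approach as the paper: the paper in fact gives no written proof, saying only that the lemma ``follows directly from the definition of the tripod operation,'' and your argument is precisely that definitional unpacking (a cycle through $c$ consists of two spokes plus a $v_i$--$v_j$ path avoiding $c$, then one minimises over the three pairs). The identification of $d(v_i,v_j)$ in the expanded graph minus $c$ with the edge-distance $d_e(e_i,e_j)$, which you single out as the delicate step, is exactly the remark the paper states immediately before the lemma, so relying on it is consistent with the paper's own treatment.
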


So more specifically, all cubic graphs with girth at least $k$ (for $k \ge
5)$ can be obtained by applying the tripod expansion operation to all
possible edge-triples $\{e_1,e_2,e_3\}$ of cubic graphs with girth at
least $k-1$ with at most 3 disjoint cycles of size $k-1$ for which
$d_t(e_1,e_2,e_3) \ge k-2$ and which destroy all cycles of size
$k-1$. We call such edge-triples \textit{eligible} edge-triples.

Now we still have to make sure that our algorithm does not output any
isomorphic copies. We apply the canonical construction path
method~\cite{mckay_98} to do this. We will now define what 
a canonical tripod expansion is and when expansions are considered equivalent.
Then the two rules of the canonical
construction path method applied to the tripod operation algorithm are as follows.

\begin{enumerate}

\item Only accept a cubic graph with girth at least $k$ if it was constructed by a canonical tripod expansion.

\item For every graph $G$ to which construction operations are
  applied, only perform one tripod expansion from each equivalence
  class of expansions of $G$.

\end{enumerate}

Two edge-triples $\{e_1,e_2,e_3\}$ and $\{e_1',e_2',e_3'\}$ of $G$ are
called \textit{equivalent} if there is an automorphism of $G$ mapping
$\{e_1,e_2,e_3\}$ to $\{e_1',e_2',e_3'\}$, i.e.\ if they are in the
same orbit of edge-triples under the action of the automorphism group
of $G$. We also denote the automorphism group of $G$ as $\Aut(G)$.

Recall from Section~\ref{subsection:intro_algo} that a
\textit{canonical expansion} is the inverse of a canonical reduction
and that we have to define a \textit{canonical reduction} which is
unique up to isomorphism.

To define an efficient criterion for a tripod reduction of a graph $G$
to be canonical, we
assign a 5-tuple $(x_0,\dots ,x_4)$ to the central vertex $c$ of every
reducible tripod and define an operation reducing
the tripod with the smallest
5-tuple as the canonical reduction.  The values of $x_0,\ldots,x_3$
are combinatorial invariants of increasing discriminating power and
cost and are defined as follows:

\begin{enumerate}

\item $x_0$ is the number of neighbours of $c$ which are part of a pentagon which does not contain $c$.

\item $x_1$ is the number of neighbours of $c$ which are part of a hexagon which does not contain $c$.

\item $x_2$ is the number of vertices at distance at most $3$ from $c$.

\item $x_3$ is the number of vertices at distance at most $4$ from $c$.

\end{enumerate}

We chose these values of $x_i$ based on several performance tests comparing different choices for $x_i$.

Clearly, $x_0=0$ for all reducible tripods when generating graphs with girth at least $6$,
so it does not discriminate between tripods and does not have to be computed.
Analogously, $x_0,x_1$ and $x_2$ do not have to be computed when generating graphs with girth at least $7$. 

We call the tripod which was inserted by the last tripod insertion
operation the \textit{inserted tripod}. The value of $x_i$ only has to
be computed for the reducible tripods which have the smallest value of
$(x_0,...,x_{i-1})$. Since we only need to know if the inserted
tripod is canonical, we can stop as soon as we have found a reducible
tripod with a smaller tuple $(x_0,...,x_{i-1})$ (which implies that
the inserted tripod is not canonical). We can also stop in case the
inserted tripod is the only reducible tripod with the smallest value
of $(x_0,...,x_{i-1})$ (which implies that the inserted tripod is
canonical).

Only in case there are multiple reducible tripods with the smallest
value of $(x_0,...,x_3)$ (and the inserted tripod is one of them), we
use the program \textit{nauty}~\cite{nauty-website, mckay_14} to
compute a canonical labelling of the graph and define $x_4$ to be the
smallest label in the canonical labelling of $G$ of a vertex which is
in the same orbit of $\Aut(G)$ as $c$.

The discriminating power of $(x_0,...,x_3)$ is usually enough to
decide whether or not the reduction is canonical, so the more
expensive computation of $x_4$ can often be avoided. For example for
generating connected cubic graphs with girth at least $6$ and $34$
vertices, the computation of $x_4$ is only required in about $4$\% of
the cases.

The definitions of $x_0$ and $x_1$ also allow look-aheads on level
$n-4$ for deciding whether or not an expansion can be canonical before
actually performing the tripod expansion operation. This allows to
avoid the construction of a lot of graphs that would not be canonical.
For example for generating connected cubic graphs with girth at least
$6$ and $n$ vertices, these look-aheads allow to avoid the
construction of about $89$\% of the graphs for $n=34$. This
percentage seems to be increasing for larger $n$.

Note that two vertices $v_1,v_2$ have the same value for
$(x_0,...,x_4)$ if and only if they are in the same orbit of the
automorphism group of the graph.

Now we can prove the following theorem.

\begin{theorem}\label{lem:exactlyone_tripod}
Assume that exactly one representative of each isomorphism class of
connected cubic graphs with $n-4$ vertices with girth at least $k-1$
and at most 3 disjoint cycles of size $k-1$ is given.  If we apply the
following steps:

\begin{enumerate}\itemsep=0pt
\item Apply the tripod insertion operation to one edge-triple in each orbit of eligible edge-triples.

\item Accept each graph with $n$ vertices if and only if the inserted
  tripod has minimal value of $(x_0,\dots ,x_4)$ among all possible
  tripod reductions of the graph.
\end{enumerate}

Then exactly one representative of each isomorphism
class of connected cubic graphs with $n$ vertices and girth at least $k$ is accepted.
\end{theorem}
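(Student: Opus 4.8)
The plan is to carry out the standard two-part verification underlying McKay's canonical construction path method~\cite{mckay_98}: first establish \emph{completeness} (every isomorphism class of the target graphs is produced at least once), then \emph{soundness} (no two accepted graphs are isomorphic). Before either half I would record two preliminary facts that the construction supplies automatically. First, whenever the tripod operation is applied to an eligible edge-triple, the resulting graph is connected, has girth at least $k$, and the inserted tripod is reducible: the central vertex is not a cut vertex because the three subdivision vertices stay attached to the connected graph $G$, while the cycle-length conditions of reducibility follow from girth $\ge k\ge 5$ exactly as in the proof of Theorem~\ref{theorem:reducible_tripod}. Thus the inserted tripod always participates as a genuine candidate in the comparison of $(x_0,\dots,x_4)$-tuples. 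Second, the invariants $x_0,\dots,x_3$ are manifestly isomorphism-invariant and $x_4$ is defined through a canonical labelling, so the property ``the inserted tripod attains the minimal $(x_0,\dots,x_4)$'' is preserved under any isomorphism of the constructed graph; moreover, by the remark preceding the theorem, the central vertices attaining this minimum form exactly one orbit of $\Aut$.

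For completeness, take any connected cubic graph $H$ on $n$ vertices with girth at least $k$. By Theorem~\ref{theorem:reducible_tripod} it has a reducible tripod, hence a canonical one; reducing it yields a graph $G'$ with $n-4$ vertices, girth at least $k-1$, and at most three disjoint cycles of length $k-1$. By hypothesis the starting list contains a graph $G\cong G'$; fix an isomorphism $\psi\colon G'\to G$, which carries the subdivided edge-triple $T'$ of $G'$ to an edge-triple $T$ of $G$. Lemma~\ref{lemma:tripod_distance} together with the fact that $H$ has girth at least $k$ forces the minimum edge-triple distance of $T$ to be at least $k-2$, and the expansion destroys every $(k-1)$-cycle, so $T$ is eligible. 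Since the algorithm expands one representative of each orbit of eligible edge-triples, it expands some $T''$ in the $\Aut(G)$-orbit of $T$; the resulting graph is isomorphic to $H$, and under this isomorphism its inserted tripod corresponds to the canonical tripod of $H$. By the isomorphism-invariance noted above, the inserted tripod is therefore canonical, so the graph is accepted.

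For soundness, suppose two accepted graphs $H_1$ and $H_2$ are isomorphic via $\phi\colon H_1\to H_2$. Both inserted tripods are canonical, and $\phi$ maps the canonical tripod of $H_1$ onto some canonical tripod of $H_2$; composing $\phi$ with a suitable automorphism of $H_2$ (which exists because the canonical central vertices form a single orbit) I may assume $\phi$ sends the inserted tripod of $H_1$ to that of $H_2$. Then $\phi$ matches central vertex to central vertex and the three subdivision vertices to the three subdivision vertices, so it descends to an isomorphism between the reduced graphs, i.e.\ to an isomorphism $G_1\to G_2$ where $G_i$ is the predecessor of $H_i$ in the starting list. Since the list contains exactly one representative per isomorphism class, $G_1=G_2=:G$ and the descended map is an automorphism of $G$ carrying the edge-triple used to build $H_1$ to the one used to build $H_2$. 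As the algorithm expands only one edge-triple per $\Aut(G)$-orbit, these two edge-triples coincide, whence $H_1=H_2$.

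The main obstacle is the soundness half, and within it the descent step: turning the isomorphism $\phi$ between the two constructed graphs into an automorphism of the common predecessor $G$ that relates the two edge-triples. This is precisely where the canonical construction path method earns its keep, and it works only because canonicity is isomorphism-invariant and because all canonical tripods lie in a single automorphism orbit --- exactly the two preliminary facts established at the outset. Checking that $\phi$ genuinely respects the reduction, so that it quotients cleanly to the reduced graphs, is the one point demanding care; everything else is bookkeeping.
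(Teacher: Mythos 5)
Your proposal is correct and takes essentially the same approach as the paper's own proof: the standard two-part canonical-construction-path argument, with completeness shown by reducing a canonical tripod and re-expanding an orbit representative in the listed predecessor, and soundness shown by composing the isomorphism with an automorphism so that canonical tripods correspond, then descending to an automorphism of the common reduced graph that relates the two expanded edge-triples. The preliminary facts you make explicit (reducibility of the inserted tripod, eligibility of the pulled-back edge-triple, and the single-$\Aut$-orbit property of canonical tripods) are exactly what the paper uses implicitly, so this is a faithful, slightly more detailed version of the same proof.
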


\begin{proof}
First we will show that at least one representative of each
isomorphism class of connected cubic graphs with $n$ vertices and
girth at least $k$ is accepted.

Let $G$ be a connected cubic graph with $n$ vertices and girth at
least $k$. It follows from Theorem~\ref{theorem:reducible_tripod} that
$G$ has a reducible tripod, so also a canonical tripod reduction. Let
$c$ be the central vertex of a canonical tripod of $G$ and we call the
graph obtained by applying the tripod reduction to $c$ $p(G)$. The
tripod of $G$ which was reduced corresponds to an edge-triple
$\{e_1,e_2,e_3\}$ in $p(G)$.

By Theorem~\ref{theorem:reducible_tripod} $p(G)$ has girth at least $k-1$
and contains at most $3$ disjoint cyles of size $k-1$, so $p(G)$ is
isomorphic to one of the input graphs $H$ to which the algorithm was
applied. Let $\gamma$ be an isomorphism from $p(G)$ to $H$.  The graph
$H$ has an eligible edge-triple which is in the same orbit of
edge-triples as $\gamma(\{e_1,e_2,e_3\})$ and the algorithm applies
the tripod insertion operation to such an edge-triple. Let $c'$ be the
central vertex of the tripod obtained by applying the tripod expansion
operation to this edge-triple in $H$. This produces a graph $G'$ which
is isomorphic to $G$, with an isomorphism $\gamma^*$ from $G$
to $G'$ mapping $c$ to $c'$. This implies that
$c'$ has a minimal value of $(x_0,...,x_4)$ and thus $G'$ is accepted by
the algorithm.

\bigskip

Now we will show that at most one representative of each isomorphism
class of connected cubic graphs with $n$ vertices and girth at least
$k$ is accepted.

Suppose the algorithm accepts two isomorphic graphs $G$ and $G'$. Call
the graphs obtained by applying a canonical tripod reduction to $G$
and $G'$ $p(G)$ and $p(G')$, respectively. Let $t$ be the canonical
tripod which was reduced in $G$ and $t'$ be the canonical tripod which
was reduced in $G'$ and let $\gamma$ be an isomorphism from $G$ to
$G'$. Since $t$ and $t'$ are both canonical tripods, $\gamma(t)$ is in
the same orbit as $t'$ under the action of $\Aut(G')$, so we can assume
that $\gamma$ maps $t$ onto $t'$. But this automorphism induces
an isomorphism $\gamma_p$ from $p(G)$ to $p(G')$. So by our assumption this 
means that $p(G)$ and $p(G')$ are identical and that $\gamma_p$ is an automorphism
mapping the edge-triple that was extended to form $G$ to the edge-triple that was
extended to form $G'$. This implies that equivalent edge-triples were extended
-- contradicting our assumption.

\end{proof}

To generate the connected cubic graphs with $n-4$ vertices with girth
at least $k-1$, we used the efficient generator for cubic graphs
described in~\cite{brinkmann_11} in case $k \le 5$ or recursively
apply the algorithm from Theorem~\ref{lem:exactlyone_tripod} in case
$k > 5$.

It is also possible to incorporate look-aheads to avoid the generation of
connected cubic graphs with $n-4$ vertices with girth at least $k-1$
with more than 3 disjoint cycles of size $k-1$. But in practice, this
is not very helpful as the generation of cubic graphs with $n-4$
vertices and girth at least $k-1$ is usually not a bottleneck.

\subsubsection{Tripod operation for generating snarks}

In this section we describe how a look-ahead for the tripod operation
can be used which often allows to avoid the generation of
$3$-edge-colourable graphs.

Note that the application of the edge tripod insertion operation to an
edge-triple can also be seen as first applying the edge insertion
operation from Figure~\ref{fig:edgeinsert_old} to two of these edges
and then applying the edge insertion operation to the new edge and the
third edge.

Given a cubic graph $G$ and let $c: E(G)\to \{1,2,3\}$ be a proper
$3$-edge-colouring of $G$. We call a cycle of $G$ where all edges have
one of two colours $c_1, c_2$ in the $3$-edge-colouring $c$ of $G$ a
\textit{colour cycle} or more specifically a
\textit{$(c_1,c_2)$-cycle}.

In~\cite{snark-paper}, jointly with H{\"a}gglund and Markstr{\"o}m we
used the following well-known fact to avoid the generation of a lot of
$3$-edge-colourable graphs using the edge insertion operation:

\begin{theorem}\label{theorem_add_edge_cycle}
	Given a cubic graph $G$ and a $3$-edge-colouring of $G$. If
        two edges $e,e'$ belong to the same colour cycle
(or equivalently: there is a path containing $e,e'$ using only edges
of two different colours), the graph
        $G'$ obtained by applying the edge insertion operation from
        Figure~\ref{fig:edgeinsert_old} to $e,e'$ will be
        $3$-edge-colourable.

\end{theorem}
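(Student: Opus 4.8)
The plan is to exhibit an explicit proper $3$-edge-colouring of $G'$, constructed by modifying the given colouring $c$ of $G$ only along the colour cycle (or colour path) that contains $e$ and $e'$. The edge insertion operation subdivides $e$ and $e'$ with new vertices and joins those two new vertices by a new edge; so $G'$ has two extra vertices and three extra edges (the two halves of each subdivided edge plus the connecting edge). My task is to assign colours in $\{1,2,3\}$ to these new edges so that properness is preserved at every vertex, including the two new degree-$3$ vertices.

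First I would fix notation: let $e,e'$ lie on a common $(c_1,c_2)$-cycle (or be joined by a $(c_1,c_2)$-path, which is the statement's equivalent formulation). Write the new vertices as $u$ (subdividing $e$) and $u'$ (subdividing $e'$), so $e$ becomes two edges $e_a,e_b$ at $u$ and $e'$ becomes $e'_a,e'_b$ at $u'$, with the new edge $f=uu'$ joining them. The key observation is that along a path or cycle using only colours $c_1,c_2$, the colours strictly alternate. I would then walk along the $(c_1,c_2)$-path from $e$ to $e'$ and swap the two colours $c_1,c_2$ on every edge of that path (a Kempe-type swap). The point of the swap is to make the colours "line up" correctly at the insertion points so that the third colour $c_3$ can be used on the connecting edge $f$ while the two halves $e_a,e_b$ (and $e'_a,e'_b$) inherit the colours $c_1,c_2$ consistently.

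Concretely, I would colour the connecting edge $f=uu'$ with the colour $c_3$ (the one not appearing on the colour cycle). At the new vertex $u$, the two half-edges $e_a,e_b$ must then receive the two colours $c_1,c_2$, one each, and similarly at $u'$; this is possible precisely because the original edge $e$ carried one of $\{c_1,c_2\}$ and the alternation along the cycle, together with the Kempe swap on the segment between $e$ and $e'$, guarantees that the colours on the far sides of $e$ and of $e'$ are compatible. At every old vertex of $G$ not on the swapped segment the colouring is unchanged and hence still proper; at the old endpoints of $e$ and $e'$ one incident colour is replaced by one of the two half-edge colours, which I would check still leaves all three colours distinct; and the interior vertices of the swapped path keep properness because swapping $c_1\leftrightarrow c_2$ uniformly along a two-coloured path preserves proper colouring.

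\textbf{The main obstacle} is the parity bookkeeping: I must verify that the number of edges between $e$ and $e'$ along the $(c_1,c_2)$-path has the right parity so that after the swap the two half-edges at $u$ and at $u'$ can simultaneously be assigned $\{c_1,c_2\}$ with the new edge $f$ getting $c_3$, without forcing a colour clash at either new vertex. Because $e$ and $e'$ lie on a \emph{common} colour cycle/path, this parity works out automatically, and the statement's parenthetical reformulation (``there is a path containing $e,e'$ using only edges of two different colours'') is exactly the hypothesis that makes the Kempe swap available. I would present the argument by case analysis on the colours originally carried by $e$ and $e'$ (which are each in $\{c_1,c_2\}$), but expect that a single clean choice of swap and of the half-edge colours handles all cases uniformly, so the proof should be short once the colouring is written down explicitly.
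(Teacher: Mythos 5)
You should first know that the paper contains no proof of this statement: it is introduced as a ``well-known fact'' carried over from~\cite{snark-paper}, and the theorem is stated without any proof, so there is nothing in the paper to compare your argument against step by step. Judged on its own merits, your proposal is correct, and it is the standard argument. One simplification is worth pointing out, because it dissolves what you call the main obstacle. Every vertex of a $(c_1,c_2)$-cycle $C$ has its third incident edge coloured $c_3$, and subdividing $e$ and $e'$ turns $C$ into a cycle $C'$ of length $|C|+2$, which is still even since a properly $2$-edge-coloured cycle has even length. So you may simply recolour all of $C'$ alternately with $c_1$ and $c_2$ and give the new edge $uu'$ the colour $c_3$: properness at the old vertices of $C$ is automatic (their off-cycle edge is $c_3$ and their two $C'$-edges alternate), properness at the two new vertices is automatic, and no other vertex or edge of $G'$ is affected. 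Your Kempe swap on one segment together with the explicit half-edge colours is exactly this alternate recolouring of $C'$ written out piecewise; in particular there is \emph{no} parity condition at all on the distance between $e$ and $e'$ along the cycle---the evenness of $C'$ comes for free---so the case analysis you anticipate at the end, and the ``parity bookkeeping'' you flag as the main obstacle, are unnecessary. Finally, note that your recolouring is the same Kempe-chain-style interchange that the paper does write out for its two look-ahead results, Theorem~\ref{theorem_tripod_snarks} and Theorem~\ref{theorem_h_snarks}, so your proof is consistent in style with the arguments the authors actually give.
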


\begin{definition}
Let $\{e_1,e_2,e_3\}$ be an edge-triple in a $3$-edge-coloured cubic graph
and $\{c_1,c_2,c_3\}= \{1,2,3\}$.

A $(c_1,c_2)$-path from
an edge $e_1$ coloured with $c_1$ to an edge $e_2$ is a path starting with 
$e_1$ and ending with $e_2$, that contains only edges with colours
$c_1$ and $c_2$. Note that the colour of $e_2$ can be both
$c_1$ and $c_2$.

\begin{itemize}
\item A \underline{prune-path} for $\{e_1,e_2,e_3\}$ is a path from $e_1$ to $e_3$ containing
$e_2$, so that the path contains a
$(c_2,c_1)$-path from $e_2$ to $e_1$ and a $(c_2,c_3)$-path from $e_2$ to $e_3$.

\item Let $T$ be a tree with $3$ leaves contained in
the edges $\{e_1,e_2,e_3\}$ so that $e_1$ also contains the (unique) vertex $v$ of degree $3$ 
and is
coloured with $c_1$. If the path starting in the vertex $v$ and ending with 
$e_2$ is a $(c_2,c_1)$-path and the path starting in the vertex $v$ and ending with 
$e_3$ is a $(c_3,c_2)$-path, the tree $T$ is called a \underline{prune-tree} for $\{e_1,e_2,e_3\}$.

\end{itemize}

\end{definition}

In the following theorem we present a look-ahead for $3$-edge-colourability for the tripod insertion operation.

\begin{theorem}\label{theorem_tripod_snarks}
Given a cubic graph $G$, a $3$-edge-colouring $c$ of $G$
and edges $\{e_1,e_2,e_3\}$ in $G$. 

If there is a prune-path or a prune-tree for $\{e_1,e_2,e_3\}$, then the graph
obtained by applying a tripod operation to $\{e_1,e_2,e_3\}$ is $3$-edge-colourable.

\end{theorem}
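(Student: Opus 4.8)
The plan is to exploit the decomposition of the tripod operation into two consecutive edge insertions noted just before the definition: applying the tripod to $\{e_1,e_2,e_3\}$ is the same as first inserting an edge between $e_1$ and $e_2$ (subdividing them at $v_1$ and $v_2$ and adding the new edge $f=v_1v_2$) and then inserting an edge between $f$ and $e_3$ (subdividing $f$ at the central vertex and $e_3$ at $v_3$). I would therefore apply Theorem~\ref{theorem_add_edge_cycle} twice, once for each insertion. The only real work is to produce, after the first insertion, a $3$-edge-colouring of the intermediate graph in which $f$ and $e_3$ again lie on a common colour cycle, so that the hypothesis of Theorem~\ref{theorem_add_edge_cycle} is met a second time.

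First I would check that in both cases $e_1$ and $e_2$ already lie on a common colour cycle. For a prune-path this is immediate, since the $(c_2,c_1)$-path from $e_2$ to $e_1$ is a two-coloured path containing both edges. For a prune-tree the edge $e_1$ has colour $c_1$ and meets the degree-$3$ vertex $v$, while the $(c_2,c_1)$-path from $v$ to $e_2$ leaves $v$ along its $c_2$-coloured edge; prepending $e_1$ thus yields a $(c_1,c_2)$-path from $e_1$ to $e_2$. Hence Theorem~\ref{theorem_add_edge_cycle} applies to the first edge insertion (on $e_1,e_2$, as in Figure~\ref{fig:edgeinsert_old}) and the resulting graph $G'$ is $3$-edge-colourable.

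The key step is to control the colouring of $G'$. Here I would use the explicit recolouring underlying the proof of Theorem~\ref{theorem_add_edge_cycle}: one colours the new edge $f$ with the third colour $c_3$ and performs a Kempe swap of $c_1$ and $c_2$ confined to the segment $Q$ of the two-coloured $(e_1,e_2)$-path lying between the subdivision points $v_1$ and $v_2$, so that everything outside $Q$ (in particular every $c_3$-coloured edge) keeps its colour. I then have to verify that the two-coloured structure reaching $e_3$ survives this swap and attaches to $f$. In the prune-path case the whole configuration is a single path $e_1,\dots,e_2,\dots,e_3$, so the sub-path to $e_1$ and the sub-path to $e_3$ leave $e_2$ through opposite endpoints; consequently $Q$ meets $e_2$ only in the half on the $e_1$-side, the half of $e_2$ on the $e_3$-side keeps colour $c_2$, and the $(c_2,c_3)$-path to $e_3$ is untouched. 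In the prune-tree case the branch from $v$ to $e_2$ and the branch from $v$ to $e_3$ are edge-disjoint because $T$ is a tree, so again $Q$ (which lies in the $e_1$-to-$e_2$ branch) avoids the $(c_3,c_2)$-path to $e_3$. In either case one reads off at $v_2$, respectively at $v_1$, a two-coloured path that starts at $f$ with colour $c_3$, leaves through a $c_2$-coloured half-edge, and then follows the preserved path to $e_3$; hence $f$ and $e_3$ lie on a common colour cycle in $G'$.

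Finally I would apply Theorem~\ref{theorem_add_edge_cycle} to the second insertion, on $f$ and $e_3$ in $G'$, to conclude that the resulting graph is $3$-edge-colourable; since this graph is exactly the result of the tripod operation on $\{e_1,e_2,e_3\}$, the theorem follows. I expect the middle step to be the main obstacle, namely ensuring that the recolouring forced by the first insertion does not destroy the two-coloured connection to $e_3$. This is precisely where the two hypotheses do the work: the single-path condition of a prune-path and the tree (edge-disjoint branch) condition of a prune-tree are exactly what keep the swap segment $Q$ away from the $e_3$-branch, and I would expect the bulk of a careful write-up to consist of this colour bookkeeping at the subdivided vertices.
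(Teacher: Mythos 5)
Your proposal is correct and follows essentially the same route as the paper's proof: decompose the tripod operation into two successive edge insertions, recolour the intermediate graph so that the new edge and $e_3$ lie on a common two-coloured path, and then invoke Theorem~\ref{theorem_add_edge_cycle} for the second insertion. The only difference is presentational: the paper delegates the recolouring to Figures~\ref{fig:recolourtripod-p} and~\ref{fig:recolourtripod-t}, whereas you spell it out explicitly as a Kempe swap on the segment between the subdivision points and verify that the prune-path/prune-tree structure keeps the $e_3$-branch untouched.
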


\begin{proof}

Applying the tripod operation to $\{e_1,e_2,e_3\}$ is equivalent to first applying
the edge insertion operation from Figure~\ref{fig:edgeinsert_old} to $e_1,e_2$ and then to the new edge and $e_3$.

If there is a prune-path for $\{e_1,e_2,e_3\}$ with a $(c_2,c_1)$-path from $e_2$ to $e_1$,
then in the graph obtained by applying the edge insertion operation to $e_1,e_2$,
there is a $(c_3,c_2)$-path from the new edge to $e_3$ for the colouring shown in
Figure~\ref{fig:recolourtripod-p}.

If there is a prune-tree for $\{e_1,e_2,e_3\}$ with $e_1$ adjacent to the vertex of degree
$3$ and a $(c_1,c_2)$-path from $e_1$ to $e_2$, then there is a $(c_3,c_2)$-path from the new
edge to $e_3$ in the graph obtained by applying the edge insertion operation to $e_1,e_2$
for the colouring shown in
Figure~\ref{fig:recolourtripod-t}.

In each case the graph after applying both edge insertion operations is $3$-edge-colourable by Theorem~\ref{theorem_add_edge_cycle}.

\end{proof}

\begin{figure}[h!t]
    \centering
    \subfloat[]{
    \label{fig:recolourtripod-p-a}
    \includegraphics[width=0.75\textwidth]{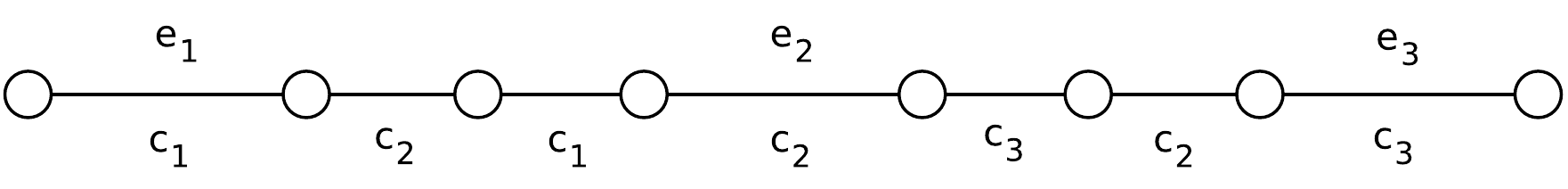}
    }

    \subfloat[]{
    \label{fig:recolourtripod-p-b}
    \includegraphics[width=0.75\textwidth]{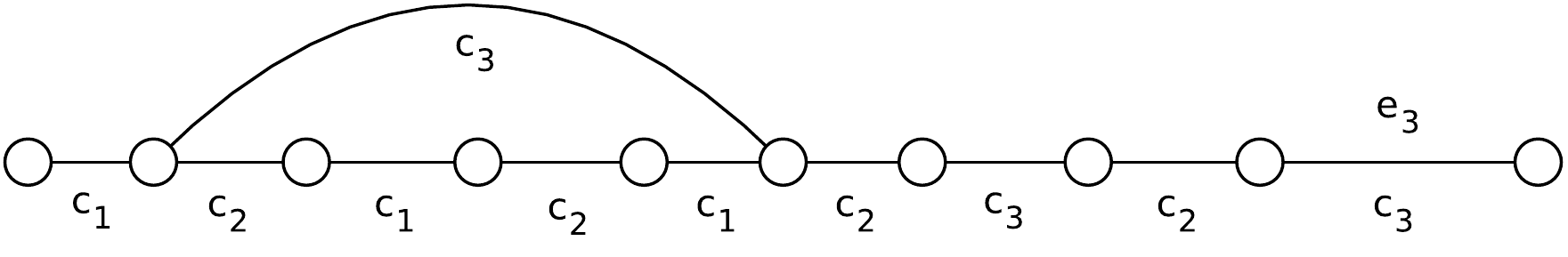}
    }    
    \caption{Procedure to recolour a prune-path when applying the tripod insertion operation to an edge-triple $\{e_1,e_2,e_3\}$ used in the proof of Theorem~\ref{theorem_tripod_snarks}.}
    \label{fig:recolourtripod-p}
\end{figure}

\begin{figure}[h!t]
    \centering
    \subfloat[]{
    \label{fig:recolourtripod-t-a}
    \includegraphics[width=0.75\textwidth]{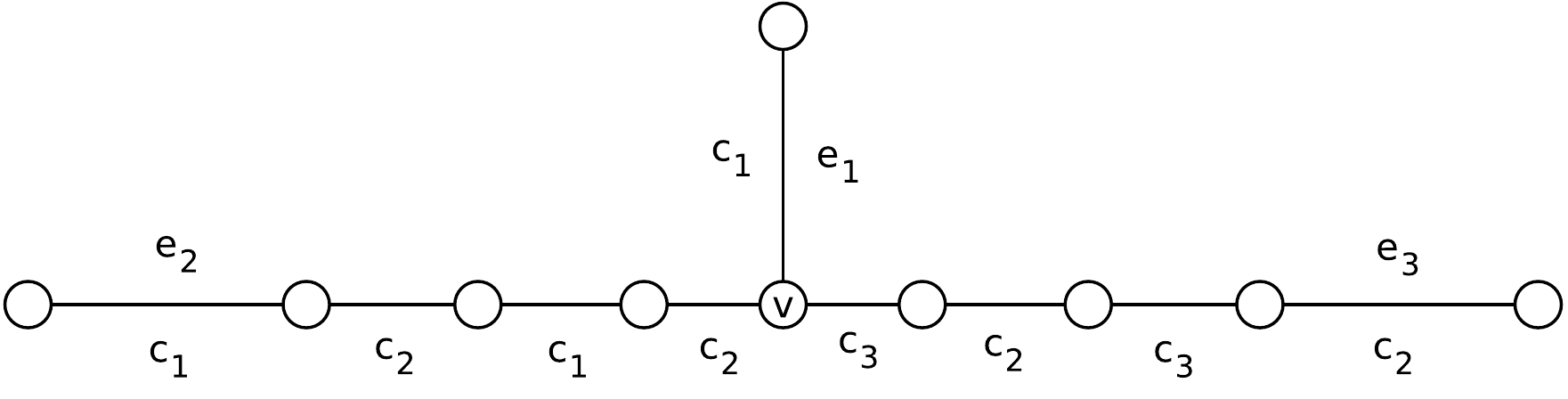}
    }

    \subfloat[]{
    \label{fig:recolourtripod-t-b}
    \includegraphics[width=0.75\textwidth]{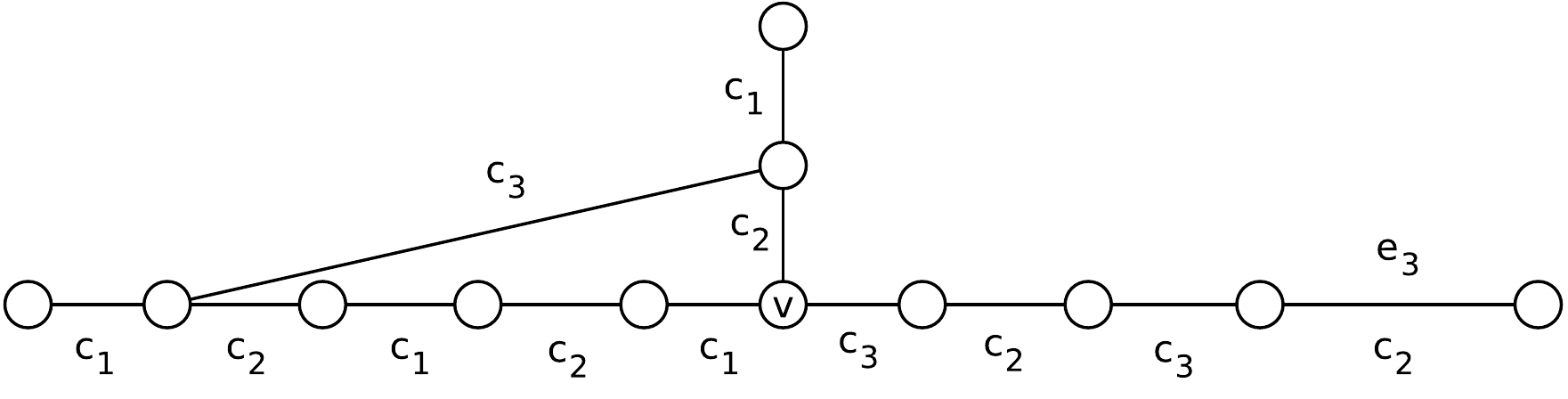}
    }    
    \caption{Procedure to recolour a prune-tree when applying the tripod insertion operation to an edge-triple $\{e_1,e_2,e_3\}$ used in the proof of Theorem~\ref{theorem_tripod_snarks}.}
    \label{fig:recolourtripod-t}
\end{figure}

We apply this look-ahead as follows for generating snarks with $n$
vertices and girth at least $k$. When a cubic graph $G$ with $n-4$
vertices and girth at least $k-1$ with at most $3$ disjoint cycles of
size $k-1$ is generated, we compute three different $3$-edge-colourings
of $G$. The look-ahead is only applicable if $G$ is
$3$-edge-colourable, but this is not an issue as nearly all cubic
graphs are $3$-edge-colourable~\cite{robinson_92}.
We then mark all edge-triples which fulfil the conditions of
Theorem~\ref{theorem_add_edge_cycle} for one of the
colourings. This allows to discard about $94$\% of the edge-triples which would otherwise be expanded for $k=6$ and $n=36$.

The cost for computing a fourth colouring turned out to be higher than
the gain achieved by the additionally discarded edge-triples. Actually
after each colouring we first discard edge-triples and only compute the next
colouring if there are sufficiently many eligible
edge-triples left.

Theorem~\ref{theorem_tripod_snarks} is only applicable for the last
step (i.e.\ the tripod operation), but as the number of graphs grows
very fast, this already gives a considerable speedup as will be seen in
Section~\ref{section:results}.

\subsection{$\mathcal{H}$ operation}
\label{subsect:h-oper}

\subsubsection{$\mathcal{H}$ operation for generating cubic graphs}

Figure~\ref{fig:h_operation} shows the $\mathcal{H}$ operation. 
We call a set $\{\{e_1,e_2\}, \{e_3,e_4\}\}$ of two sets, each containing two
edges and with all four edges pairwise different an \textit{edge-quadruple}.
This
operation subdivides the edges of an edge-quadruple $\{\{e_1,e_2\},
\{e_3,e_4\}\}$ and inserts an $\mathcal{H}$ graph between them. An $\mathcal{H}$ graph is a tree of 6 vertices with 2 adjacent vertices of degree~3. This
construction operation yields a cubic graph with $n+6$ vertices when it
is applied to an edge-quadruple of a cubic graph with $n$
vertices.

We call the edge where both vertices have degree 3 in the
$\mathcal{H}$ graph the \textit{central edge} of the
$\mathcal{H}$. The central edge is labelled $e_c$ in
Figure~\ref{fig:h_operation}. Note that the central edge of an
$\mathcal{H}$ uniquely identifies the $\mathcal{H}$.

\begin{figure}[h!t]
	\centering
	\includegraphics[width=0.9\textwidth]{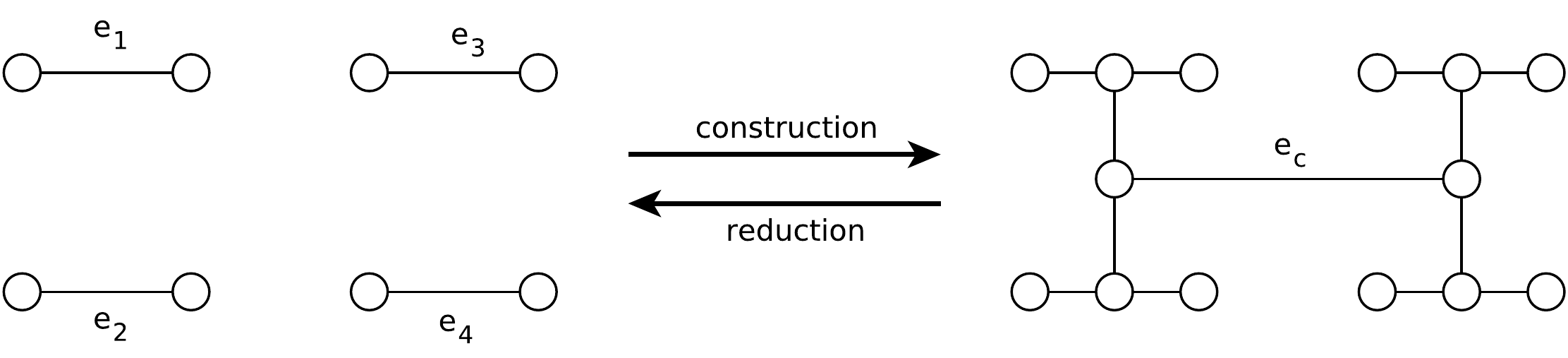}
	\caption{The $\mathcal{H}$ construction and reduction operation.}
	\label{fig:h_operation}
\end{figure}

Let $G[S]$ denote the subgraph of a graph $G$ induced by the set of
vertices $S \subseteq V(G)$. Given a connected graph $G$. We call an
edge $e = \{x,y\}$ \textit{good} if $G[V(G) - \{x,y\}]$ is connected.
We call an $\mathcal{H}$ \textit{reducible} if the central edge of the $\mathcal{H}$ is good and each cycle with $i$ endvertices of $\mathcal{H}$ has length at least $3+i$.


The following definitions and lemmas are needed for the proof that
every connected cubic graph with girth at least $5$ has a reducible
$\mathcal{H}$.

\begin{lemma}
\label{lemma:good_edge}
Every connected cubic graph has a \textit{good} edge.
\end{lemma}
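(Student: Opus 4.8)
The plan is to use a depth-first search (DFS) spanning tree and to exploit the well-known fact that a DFS of an undirected graph produces only tree edges and back edges, i.e.\ every non-tree edge joins a vertex to one of its ancestors. First I would root a DFS tree $T$ of $G$ at an arbitrary vertex $r$, let $x$ be a leaf of $T$ of \emph{maximum} depth, and let $y$ be its parent in $T$. The claim is then that $\{x,y\}$ is a good edge.

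The key structural observation is that, because $x$ has maximum depth among all leaves, every child of $y$ is itself a leaf at the same depth as $x$: any child $c$ of $y$ has the same depth as $x$, and since no leaf is deeper than $x$, the subtree rooted at $c$ cannot descend further, so $c$ has no children. Thus, deleting $y$ from $T$ splits it into the subtree $T_r$ consisting of all vertices that are not descendants of $y$ (this contains $r$ and is connected, being the whole tree minus one branch), together with the single-vertex subtrees of the children of $y$. Deleting $x$ as well simply removes one of these isolated children, so the components of $T-\{x,y\}$ are $T_r$ together with the remaining leaf-children $c_1,\dots,c_m$ of $y$.

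It then remains to reconnect each $c_i$ to $T_r$ inside $G-\{x,y\}$, and here I use that $G$ is cubic and simple: each leaf $c_i$ has degree $3$, one edge being the tree edge to its parent $y$ and the other two being non-tree (back) edges to proper ancestors of $c_i$. Since $c_i$ and $y$ are distinct neighbours in a simple graph, these two back edges cannot both land on $y$, so at least one of them goes to a proper ancestor of $y$, i.e.\ to a vertex of $T_r$. Hence every $c_i$ is joined to the connected set $T_r$ by an edge that avoids both $x$ and $y$, and therefore $G-\{x,y\}$ is connected.

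The only obstacle I anticipate is the degenerate case $y=r$, which would force $T$ to have depth $1$; but then $r$ would have three leaf-neighbours, each needing two further back edges to the single available ancestor $r$, which is impossible in a simple graph. Hence one may assume $y\neq r$ and $T_r\neq\emptyset$, and the argument goes through. The subtlety worth emphasising in the write-up is the maximum-depth choice of $x$: this is precisely what forces the siblings of $x$ to be leaves whose back edges climb strictly above $y$, so that deleting the potentially separating vertex $y$ is repaired by those back edges rather than by $x$ itself.
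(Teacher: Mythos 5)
Your proof is correct, but it takes a genuinely different route from the paper. The paper proves Lemma~\ref{lemma:good_edge} by induction along de Vries' generation theorem: every connected cubic graph is obtained from $K_4$ by recursively applying the three operations of Figure~\ref{fig:operations_devries}, and one checks that $K_4$ has good edges and that each operation preserves the existence of good edges (operation (a) even creates five new ones, and when (b) or (c) destroys a good edge, new good edges appear among the inserted edges). You instead give a direct, self-contained argument: root a DFS tree, take a deepest leaf $x$ with parent $y$, observe that all children of $y$ are then leaves, and use the no-cross-edge property of undirected DFS together with simplicity to show that every sibling of $x$ has a back edge climbing strictly above $y$, so $G-\{x,y\}$ stays connected; your treatment of the degenerate case $y=r$ is also sound. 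Your key steps all hold: siblings of $x$ must be leaves by depth-maximality, their non-tree edges are back edges to proper ancestors, and none of these can land on $y$ in a simple graph (in fact you only need that not both do). What each approach buys: the paper's proof is short because it leans on a known (but nontrivial) generation theorem that the paper cites anyway, and it stays within the paper's generation-theoretic framework; your proof avoids any appeal to that theorem, is constructive (DFS finds a good edge in linear time), and actually uses cubicity only to guarantee that the leaf-siblings have enough back edges, so it generalizes with no change to simple connected graphs of minimum degree at least~$3$.
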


\begin{proof}
De Vries~\cite{de_vries_1889, de_vries_1891} has shown that every
connected cubic graph can be obtained by recursively applying the
construction operations from Figure~\ref{fig:operations_devries} to a
$K_4$. 

$K_4$ has $6$ good edges, so it is sufficient to show that when applying the
operations to a graph with good edges, the result has good edges too.
Operation (a) adds a subgraph that contains $5$ good edges, so the result
of applying this operation always has good edges. When operation (b) or (c) are applied,
good edges not involved in the operation stay good, so assume that a good
edge is involved in the operation.

For (b) it is easy to see that in this case all newly inserted edges are good,
while for (a) both parts of the good edge that was subdivided are new good
edges.

\end{proof}

\begin{figure}[h!t]
	\centering
	\includegraphics[width=0.75\textwidth]{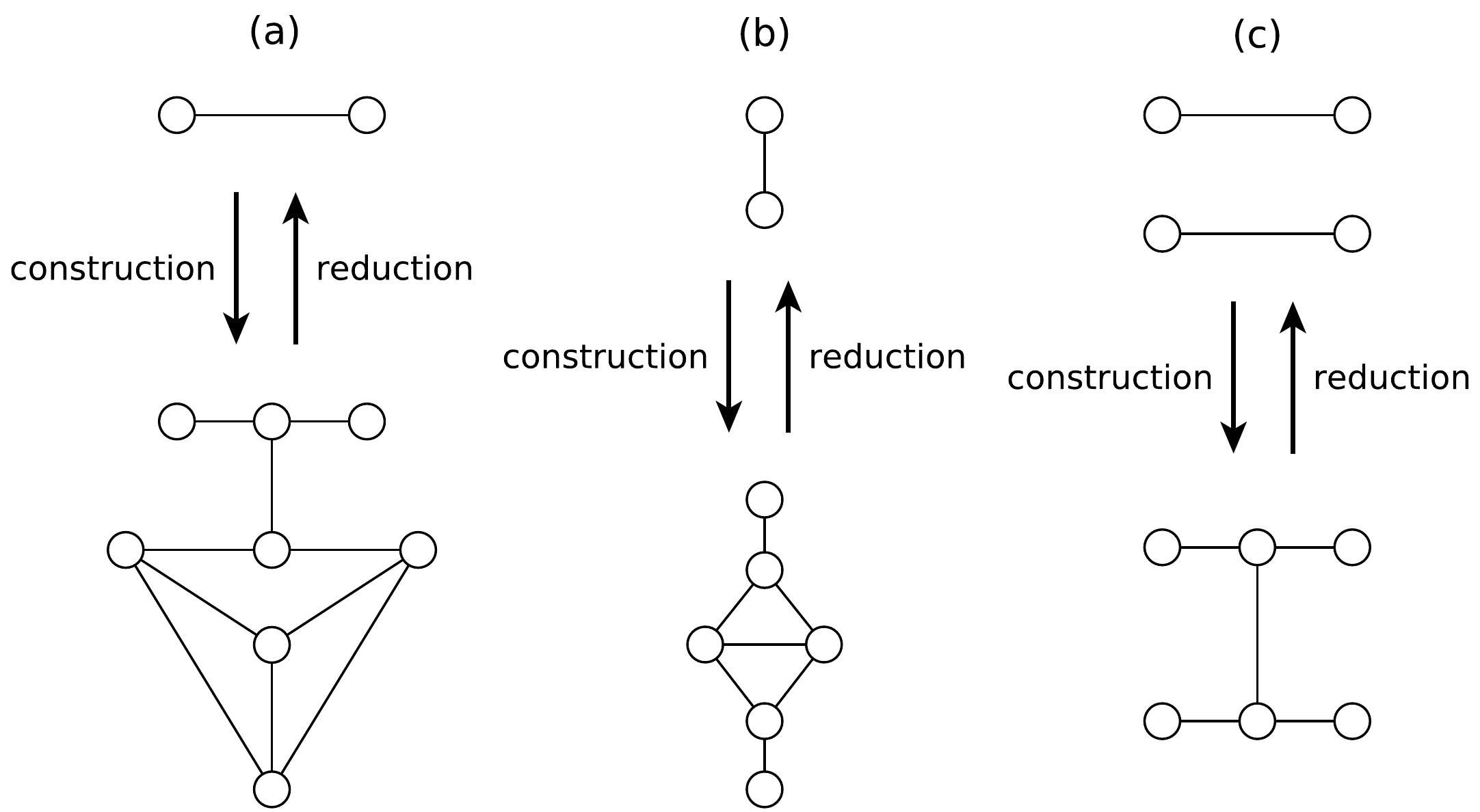}
	\caption{The construction operations for cubic graphs by de
          Vries~\cite{de_vries_1889, de_vries_1891}. Note: the
          edge-pair to which operation $(c)$  is applied is allowed to have a
          common vertex.}
	\label{fig:operations_devries}
\end{figure}

Recall from Section~\ref{subsect:tripod} that $d_e(e_1,e_2)$ denotes
the edge-distance between two edges $e_1$ and $e_2$. We define the
minimum distance $d_q(e_1,e_2,e_3,e_4)$ of an edge-quadruple 
$\{\{e_1,e_2\}, \{e_3,e_4\}\}$ as:
\begin{equation*}
min[d(e_1,e_2),d(e_3,e_4),d(e_1,e_3)+1,d(e_1,e_4)+1,d(e_2,e_3)+1,d(e_2,e_4)+1]
\end{equation*}

From the definition of the  $\mathcal{H}$ expansion we immediately get the following lemma:

\begin{lemma}
\label{lemma:h_distance}
When applying the $\mathcal{H}$ expansion operation to the
edge-quadruple $\{\{e_1,e_2\}, \{e_3,e_4\}\}$ without duplicate edges
of a graph $G$, the smallest cycle in the expanded graph which
contains an edge of the newly inserted  $\mathcal{H}$ 
has size $d_q(e_1,e_2,e_3,e_4) + 2$.
\end{lemma}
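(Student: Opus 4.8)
The plan is to exploit that the two degree-$3$ vertices of the inserted $\mathcal{H}$, say $a$ and $b$ joined by the central edge $e_c$, are attached to the rest of the expanded graph $G'$ only through the four legs of the $\mathcal{H}$. First I would fix notation: let $w_i$ be the vertex subdividing $e_i$, so the $\mathcal{H}$ is exactly the five edges $e_c=\{a,b\}$, $\{a,w_1\}$, $\{a,w_2\}$, $\{b,w_3\}$, $\{b,w_4\}$, while each $w_i$ is in addition joined to the two endpoints of $e_i$. The structural point is that in $G'$ the only neighbours of $\{a,b\}$ outside $\{a,b\}$ are $w_1,w_2,w_3,w_4$, each reached by a single edge. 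Thus any cycle $C$ containing an edge of the $\mathcal{H}$ must meet $a$ or $b$, and it crosses the four-leg edge-cut an even number of times, necessarily at least twice; I would first treat the main case where $C$ uses exactly two legs and return to the four-leg case at the end.

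In the two-leg case $C$ decomposes into a path inside the $\mathcal{H}$-tree joining two leaves $w_i,w_j$ and a path between the same two leaves that avoids $a$ and $b$. The tree paths are short and easy to list: $w_1\,a\,w_2$ and $w_3\,b\,w_4$ have length $2$, whereas each crossing path $w_i\,a\,b\,w_j$ with $i\in\{1,2\}$ and $j\in\{3,4\}$ has length $3$. For the outside portion I would invoke the identification recorded for the tripod in Section~\ref{subsect:tripod}: the distance between $w_i$ and $w_j$ in $G'-\{a,b\}$ equals $d_e(e_i,e_j)$. A shortest outside path together with the chosen tree path is a genuine simple cycle, since the outside path avoids $\{a,b\}$ and the two pieces meet only in $w_i,w_j$; its length is $\ell_{ij}+d_e(e_i,e_j)$ with $\ell_{12}=\ell_{34}=2$ and $\ell_{ij}=3$ for the four crossing pairs. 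Minimising $\ell_{ij}+d_e(e_i,e_j)$ over the six pairs yields $2$ plus the minimum of the six entries in the definition of $d_q$, i.e.\ $2+d_q(e_1,e_2,e_3,e_4)$.

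The one remaining possibility, and the only mild obstacle, is a cycle crossing the cut four times; it then uses both legs at $a$ and both legs at $b$ but not $e_c$, so its length is $4$ plus the lengths of two outside paths, each joining a leaf on the $a$-side to a leaf on the $b$-side. These two paths realise two crossing distances $d_e(\cdot,\cdot)$, and every crossing distance is at least $d_q-1$ by the definition of $d_q$; hence such a cycle has length at least $4+2(d_q-1)=2+2d_q\ge 2+d_q$ and can never undercut the two-leg minimum. Together the two cases show that every cycle meeting the $\mathcal{H}$ has length at least $2+d_q$ and that this bound is attained, which is exactly the assertion of the lemma.
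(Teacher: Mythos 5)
Your cycle decomposition is the right one, and it is already more than the paper itself offers: the paper states this lemma without any proof, as ``immediate from the definition''. The classification of cycles through an $\mathcal{H}$-edge by their crossings of the four-leg cut, the three tree-path lengths ($2$ for $w_1\,a\,w_2$ and $w_3\,b\,w_4$, $3$ for the crossing paths, matching the $+1$ terms of $d_q$), and the domination of the four-crossing case are all correct. The gap is in the step you delegate to Section~\ref{subsect:tripod}: the pairwise identification $d_{G'-\{a,b\}}(w_i,w_j)=d_e(e_i,e_j)$ is false in general, because the \emph{other} two edges of the quadruple are also subdivided in $G'$. If every shortest path of $G$ containing $e_i$ and $e_j$ passes through some $e_l$, $l\neq i,j$, then every corresponding path in $G'-\{a,b\}$ is lengthened by the subdivision vertex $w_l$, so $d_{G'-\{a,b\}}(w_i,w_j)>d_e(e_i,e_j)$. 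The inequality $d_{G'-\{a,b\}}(w_i,w_j)\ge d_e(e_i,e_j)$ does hold (subdividing edges never shortens connections), so your lower bound of $d_q+2$ survives; what breaks is attainability: the cycle of length $\ell_{ij}+d_e(e_i,e_j)$ that you construct for the pair minimizing $d_q$ need not exist, since for exactly that pair the shortest outside path may be strictly longer than $d_e(e_i,e_j)$. Citing the analogous remark in Section~\ref{subsect:tripod} does not close this hole, because that remark carries the same imprecision.

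The repair, which is the content hidden behind the paper's ``immediately'', is a compensation argument. Suppose $d_q=d_e(e_1,e_2)$ and every shortest path realizing $d_e(e_1,e_2)$ contains, say, $e_3$; the prefix of such a path up to and including $e_3$ avoids $e_2$, so $d_e(e_1,e_3)+1\le d_e(e_1,e_2)=d_q$, and hence a crossing pair also attains the minimum. Moreover, for a crossing pair attaining the minimum, no shortest path realizing it can contain a further quadruple edge: splitting it there would produce a term of $d_q$ strictly below the minimum, a contradiction. For such an unobstructed pair the shortest path survives intact in $G'-\{a,b\}$, the identification is valid, and gluing it to the corresponding tree path yields a simple cycle of length exactly $d_q+2$. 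With this argument added your proof is complete; without it you have only established the inequality ``every cycle through the $\mathcal{H}$ has length at least $d_q+2$'', not the equality claimed in the lemma.
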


\begin{theorem}
\label{theorem:reducible_h}
Every connected cubic graph $G$ with girth at least $k\ge 5$ has a reducible $\mathcal{H}$.
The girth of the reduced graph is at least $k-2$ if $k\in \{5,6\}$ and at least $k-1$ otherwise.
\end{theorem}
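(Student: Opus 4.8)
The plan is to prove the two assertions separately: first the existence of a reducible $\mathcal{H}$, then the girth bound on the reduced graph. For existence I would invoke Lemma~\ref{lemma:good_edge} to select a good edge $\{u,v\}$ of $G$ and use it as the central edge $e_c$ of an $\mathcal{H}$. The remaining five vertices are then forced: the two neighbours $a_1,a_2$ of $u$ other than $v$, and the two neighbours $a_3,a_4$ of $v$ other than $u$, which serve as the four endvertices. Since $G$ has girth $k\ge 5$, these six vertices are pairwise distinct (a coincidence such as $a_1=a_3$ would force the triangle $u,v,a_1$), so they genuinely span an $\mathcal{H}$. To see that this $\mathcal{H}$ is reducible, I note its central edge is good by construction, and I would verify that every cycle through $i$ of the endvertices has length at least $3+i$ directly from girth $k\ge 5$, by the same kind of distance estimate used for the tripod in Theorem~\ref{theorem:reducible_tripod} and assembled below.

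The heart of the proof is the girth bound, and the first step is to record the relevant distances. Writing $d$ for distances in $G-\{u,v\}$, combining a shortest $a_1$--$a_2$ path with the length-$2$ path $a_1,u,a_2$ yields a cycle of $G$, so $d(a_1,a_2)\ge k-2$, and likewise $d(a_3,a_4)\ge k-2$; combining a shortest $a_1$--$a_3$ path with the length-$3$ path $a_1,u,v,a_3$ gives $d(a_1,a_3)\ge k-3$, and similarly for every ``cross'' pair. Thus endvertices on the same side of the central edge are at distance at least $k-2$, while endvertices on opposite sides are at distance at least $k-3$. I would then set up the correspondence between the two graphs: a cycle $C'$ of the reduced graph that uses $m$ of the restored edges lifts to a cycle $C$ of $G$ which avoids $u,v$ and passes through the corresponding $m$ endvertices, with $|C'|=|C|-m$; conversely every cycle of $G$ avoiding $u,v$ arises this way. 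Hence it suffices to lower-bound $|C|-m$ over all such $C$.

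I would finish by a case analysis on $m\in\{0,1,2,3,4\}$. For $m\le 1$ girth gives $|C|\ge k$, so $|C'|\ge k-1$. For $m=2$ the two arcs of $C$ force $|C|\ge 2(k-2)$ when the endvertices lie on the same side and $|C|\ge\max\{k,2(k-3)\}$ when they lie on opposite sides; the latter is the binding case, yielding $|C'|\ge\max\{k-2,2k-8\}$. For $m=3$ exactly one same-side and two cross arcs give $|C|\ge 3k-8$, and for $m=4$ the minimising alternating arrangement gives $|C|\ge 4(k-3)$; subtracting $m$ leaves $3k-11$ and $4k-16$ respectively, both at least $k-1$ for $k\ge 5$. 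Collecting the bounds, the minimum is $k-2$, attained by the opposite-side $m=2$ case, precisely when $k\in\{5,6\}$, and it rises to $k-1$, now governed by the $m=1$ case, as soon as $k\ge 7$, since $2k-8\ge k-1$ exactly there.

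The main obstacle I anticipate is exactly this opposite-side $m=2$ configuration: it is the feature that distinguishes the $\mathcal{H}$ operation from the tripod (whose endvertices share a single centre and are therefore all ``same-side''), and it is the sole reason the guaranteed girth drops to $k-2$ for the small values $k\in\{5,6\}$ while recovering to $k-1$ for $k\ge 7$. Care is needed to confirm that the claimed arc-length bounds can be attained simultaneously by a genuine cycle so that the case analysis is tight, and to check that the reducibility condition $3+i$ is never violated, but neither of these requires more than the distance estimates already assembled.
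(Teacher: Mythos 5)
Your proposal is correct and takes essentially the same route as the paper's proof: pick a good edge via Lemma~\ref{lemma:good_edge} as the central edge, establish the distance bounds $k-2$ (same-side pairs) and $k-3$ (cross pairs) in $G-\{u,v\}$, use the correspondence that a surviving cycle of length $g$ through $i$ endvertices has length $g-i$ after reduction, and do the case analysis on $i$, with the opposite-side $i=2$ case being exactly what limits the bound to $k-2$ for $k\in\{5,6\}$. A minor remark: your $i=4$ estimate $|C|\ge 4(k-3)$ is in fact more careful than the paper's stated $4(k-2)$ (which overlooks that cross pairs are only guaranteed distance $k-3$), though the conclusion $g-4\ge k-1$ holds either way.
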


\begin{proof}
Let $e=\{v,w\}$ be a good edge in $G$ and $v_1,v_2, v_3,v_4$ the vertices of the $\mathcal{H}$
not contained in $e$, so that in $\mathcal{H}$ we have that 
$d_{\mathcal{H}}(v_1,v_2)=d_{\mathcal{H}}(v_3,v_4)=2$. (Here $d_{G}(a,b)$ stands for the distance between two vertices $a$ and $b$ in the graph $G$). It follows that in the graph
$\bar G=G[V(G)-\{v,w\}]$, $d_{\bar G}(v_1,v_2)\ge k-2$, $d_{\bar G}(v_3,v_4)\ge k-2$, and
for $x\in \{v_1,v_2\}, y\in \{v_3,v_4\}$ we have $d_{\bar G}(x,y)\ge k-3$. As all these
distances are at least $2$, this also implies
that after the reduction the edges that formerly contained an endpoint
of the  $\mathcal{H}$ are pairwise different.

If a cycle in $G$ that does not contain edges of the $\mathcal{H}$ centered around
$e$,
has length $g$ and contains $i$ vertices from $\{v_1,v_2, v_3,v_4\}$,
after the reduction it will have length $g-i$, so we have to prove that 
for each such cycle $g-i\ge k-2 \ge 3$ 
to show that the result has neither loops nor double edges. Furthermore we have to show that
for $k>6$ we have that $g-i\ge k-1$.

As $g\ge 5$ the result is trivial for $i=1$ for all $k$ and for $i\le 2$ if $k\in \{5,6\}$. 
So assume just that $k>6$ and $i=2$. The distance of the two endpoints of the $\mathcal{H}$
on the cycle must be at least $k-3$, so $g\ge 2(k-3) = k + (k-6) \ge k+1$. So
$g-i=g-2\ge k-1$.

So now assume that $k \ge 5$ and $i>2$.

If $i=3$, two of the vertices on the cycle come from one of the sets $\{v_1,v_2\},\{v_3,v_4\}$,
so w.l.o.g.\ the vertices are $v_1,v_2,v_3$. This implies that 
$g \ge d_{\bar G}(v_1,v_2) + d_{\bar G}(v_1,v_3) + d_{\bar G}(v_2,v_3) \ge (k-2) + 2(k-3) = k +(2k - 8)\ge k+2$. 
So $g-i = g-3 \ge k-1$.

If $i=4$ we have all $4$ vertices on the cycle and as $d_{\bar G}(x,y)\ge 2$ for
all $x,y\in \{v_1,v_2, v_3,v_4\}$, $x\not= y$, we have $g\ge 4(k-2)= k +(3k-8) \ge k+7$ and 
$g-i = g-4 \ge k+3$.

\end{proof}

So all connected cubic graphs with girth at least $k$ (for $k \ge 5$)
with $n$ vertices can be obtained by applying the $\mathcal{H}$
operation to all possible edge-quadruples of connected cubic
graphs with $n-6$ vertices in such a way that the $\mathcal{H}$
operation destroys all cycles of size smaller than $k$ without
creating any new cycles of size smaller than $k$. We call such edge-quadruples \textit{eligible} edge-quadruples.

So in order for $\{\{e_1,e_2\}, \{e_3,e_4\}\}$ to be an eligible edge-quadruple
for generating cubic graphs with girth at least $k$,
$d_q(e_1,e_2,e_3,e_4)$ must be at least $k-2$.

Assume our aim is to generate cubic graphs with girth at least $k$. We
define the \textit{deficit} $d(k,C)$ of a cycle $C$ in a graph $G$ as
$max(k-|C|,0)$. We define the deficit $d(k,G)$ of a graph $G$ as:

\begin{equation*}
max(\{\sum_{c \in C} d(k,c) \ | \  C\ \text{is a set of disjoint cycles of } G \})
\end{equation*}

\begin{lemma}
\label{lemma:reduced_h}
When applying the $\mathcal{H}$ reduction operation to a cubic graph
with girth at least $k\ge 5$, the reduced graph $G$ has deficit $d(k,G)\le 4$.

\end{lemma}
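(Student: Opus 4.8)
The plan is to compare cycles in the reduced graph $G$ with cycles in the original graph, whose girth is at least $k$, by tracking how many of the four edges of the edge-quadruple each cycle of $G$ traverses. Write $G^+$ for the cubic graph of girth at least $k$ to which the $\mathcal{H}$ reduction is applied, and let $\{\{e_1,e_2\},\{e_3,e_4\}\}$ be the edge-quadruple of $G$ produced by the reduction, with $v_1,\dots,v_4$ the subdivision vertices lying on $e_1,\dots,e_4$ that are attached to the central edge of the $\mathcal{H}$ in $G^+$. By Theorem~\ref{theorem:reducible_h} these four edges are pairwise distinct, so the reduction genuinely merges four subdivided edges of $G^+$ back into $e_1,\dots,e_4$, with no other edge of $G^+$ affected.

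First I would establish the key correspondence. Any cycle $c$ in $G$ lifts to a cycle in $G^+$: re-subdivide each edge among $e_1,\dots,e_4$ that $c$ traverses by reinserting the corresponding vertex $v_j$, and leave every other edge of $c$ untouched. If $c$ uses exactly $t$ of the four quadruple-edges, then the lifted object is an honest (simple) cycle of $G^+$ of length $|c|+t$, since it passes straight through the $t$ reinserted subdivision vertices without entering the interior of the $\mathcal{H}$. Because $G^+$ has girth at least $k$, this forces $|c|+t\ge k$, and hence $d(k,c)=\max(k-|c|,0)\le t$.

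Next I would bound the total deficit of an arbitrary set $C$ of disjoint cycles of $G$. Since the cycles in $C$ share no vertices, they share no edges, so each of the four edges $e_1,e_2,e_3,e_4$ lies on at most one cycle of $C$. Writing $t_c$ for the number of quadruple-edges on a cycle $c$, this gives $\sum_{c\in C} t_c\le 4$, and combining with the previous step yields $\sum_{c\in C} d(k,c)\le\sum_{c\in C} t_c\le 4$. As this holds for every set $C$ of disjoint cycles, the maximum in the definition of $d(k,G)$ is at most $4$, which is the claim.

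The main point to get right is the lifting correspondence in the second paragraph: one must check that the extra length contributed to the lifted cycle is \emph{exactly} the number of quadruple-edges $c$ uses, which relies on the fact that in $G^+$ each subdivision vertex $v_j$ sits only on the subdivided copy of $e_j$, so a cycle of $G$ avoiding the interior of the $\mathcal{H}$ passes straight through $v_j$ and contributes precisely one extra vertex per used quadruple-edge. Once this is pinned down, the disjointness argument bounding $\sum_{c\in C} t_c$ by the four available edges is immediate.
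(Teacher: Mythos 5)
Your proof is correct and takes essentially the same approach as the paper: the paper's one-line argument --- that a cycle of length $k-i$ in the reduced graph must contain at least $i$ of the edges intersected by endpoints of the $\mathcal{H}$ --- is exactly your lifting correspondence $d(k,c)\le t_c$, and the bound $\sum_{c\in C} t_c\le 4$ for vertex-disjoint (hence edge-disjoint) cycles is the same implicit counting of the four available edges. You have merely spelled out the re-subdivision step that the paper leaves to the reader.
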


\begin{proof}

The fact that in a cycle of length $k-i$ at least $i$ edges must be intersected by endpoints
of the $\mathcal{H}$ operation immediately implies that $d(k,G)\le 4$.

\end{proof}

As in Section~\ref{subsect:tripod}, we also use the canonical
construction path method~\cite{mckay_98} to make sure that our
algorithm does not output any isomorphic copies. The two rules of the
canonical construction path method applied to the $\mathcal{H}$
operation algorithm are:

\begin{enumerate}

\item Only accept a cubic graph with girth at least $k$ if it was constructed by a canonical $\mathcal{H}$ expansion.

\item For every graph $G$ to which construction operations are
  applied, only perform one $\mathcal{H}$ expansion from each
  equivalence class of expansions of $G$.

\end{enumerate}

Two edge-quadruples $\{\{e_1,e_2\}, \{e_3,e_4\}\}$ and
$\{\{e_1',e_2'\}, \{e_3',e_4'\}\}$ of $G$  are called \textit{equivalent} if
there is an automorphism of $G$ mapping the set $\{\{e_1,e_2\}, \{e_3,e_4\}\}$
to the set $\{\{e_1',e_2'\}, \{e_3',e_4'\}\}$.

To define an efficient criterion for an $\mathcal{H}$ reduction of a graph
$G$ to be canonical, we assign a $7$-tuple $(x_0,\dots ,x_6)$ to the central edge $e_c$
of every reducible $\mathcal{H}$ and define a reducible $\mathcal{H}$
with the smallest $7$-tuple as the canonical reduction.

We did not implement the $\mathcal{H}$ operation for $k=5$ as then the
edge-quadruples might contain incident edges, which would result in a large number
of possibly eligible edge-quadruples, so that we did not expect an advantage over the
tripod operation.

The values of $x_0,\ldots,x_4$ are combinatorial invariants of increasing
discriminating power and cost and are defined as follows:

\begin{enumerate}

\item $x_0$ is the number of hexagons containing $e_c$.

\item $x_1$ is the number of neighbours of $e_c$ which are part of a hexagon which does not contain $e_c$.

\item $x_2$ is the number of heptagons containing $e_c$.

\item $x_3$ is the number of vertices at distance at most $3$ from a vertex in $e_c$.

\item $x_4$ is the number of vertices at distance at most $4$ from a vertex in $e_c$.

\end{enumerate}

Clearly, $x_0$ and $x_1$ are constant when generating graphs with girth at least $7$, so in that case they do
not have to be computed.

Only if there are multiple reducible $\mathcal{H}$'s with the smallest
value of $(x_0,...,x_4)$ (and the inserted $\mathcal{H}$ is one of
them), we use \textit{nauty}~\cite{nauty-website, mckay_14} to compute
a canonical labeling. The canonical vertex labeling induces a
lexicographic ordering of the edges and we define $x_5, x_6$ with $x_5<x_6$
to be the labels of the lexicographically smallest edge in the same orbit of
$\Aut(G)$ as $e_c$.

The discriminating power of $(x_0,...,x_4)$ is usually enough to
decide whether or not the reduction is canonical so the more expensive
computation of $x_5,x_6$ can often be avoided. For example for
generating connected cubic graphs with girth at least $6$ and $34$
vertices, the computation of $x_5,x_6$ is only required in about $3$\% of
the cases.

Also note that the definitions of $x_0$ or $x_2$ allow look-aheads on level $n-6$ for deciding whether or
not an expansion can be canonical before actually performing the
$\mathcal{H}$ expansion operation.  For example for generating
connected cubic graphs with girth at least $6$ and $34$ vertices, these
look-aheads allow to avoid the construction of about $85$\% of the
graphs.

We omit the proof of the following theorem as it is completely
analogous to the proof of Theorem~\ref{lem:exactlyone_tripod}.

\begin{theorem}
Assume that exactly one representative $G$ of each isomorphism class of
connected cubic graphs with $n-6$ vertices with deficit $d(k,G)$ at most 4 and
girth at least $k-2$ (in case of $k\in \{5,6\}$) or girth at least $k-1$
(in case of $k \ge 7)$ is given.  If we apply the following steps:

\begin{enumerate}\itemsep=0pt
\item Apply the $\mathcal{H}$ insertion operation to one
  edge-quadruple in every equivalence class of eligible edge-quadruples.

\item Accept each graph with $n$ vertices if and only if the inserted
  $\mathcal{H}$ has minimal value of $(x_0,\dots ,x_6)$ among all
  possible $\mathcal{H}$ reductions of the graph.
\end{enumerate}

Then exactly one representative of each isomorphism
class of connected cubic graphs with $n$ vertices and girth at least $k$ is accepted.
\end{theorem}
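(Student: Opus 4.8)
The plan is to mirror the proof of Theorem~\ref{lem:exactlyone_tripod} essentially line by line, replacing tripods by $\mathcal{H}$'s, edge-triples by edge-quadruples, and the $5$-tuple $(x_0,\dots,x_4)$ by the $7$-tuple $(x_0,\dots,x_6)$. As in the tripod case, the argument splits into two halves: first showing that \emph{at least} one representative of each isomorphism class of connected cubic graphs with $n$ vertices and girth at least $k$ is accepted, and then showing that \emph{at most} one is accepted.

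For the first half I would take an arbitrary connected cubic graph $G$ with $n$ vertices and girth at least $k$. By Theorem~\ref{theorem:reducible_h}, $G$ has a reducible $\mathcal{H}$, hence a canonical $\mathcal{H}$ reduction (the one whose central edge has the minimal $7$-tuple). Let $e_c$ be the central edge of a canonical $\mathcal{H}$ and let $p(G)$ be the graph obtained by reducing it; the reduced $\mathcal{H}$ corresponds to an edge-quadruple $\{\{e_1,e_2\},\{e_3,e_4\}\}$ in $p(G)$. By Theorem~\ref{theorem:reducible_h} the girth of $p(G)$ is at least $k-2$ for $k\in\{5,6\}$ and at least $k-1$ for $k\ge 7$, and by Lemma~\ref{lemma:reduced_h} we have $d(k,p(G))\le 4$; therefore $p(G)$ is isomorphic to one of the input graphs $H$, say via an isomorphism $\gamma$. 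Since the algorithm applies the $\mathcal{H}$ insertion to one edge-quadruple in each equivalence class of eligible edge-quadruples, it expands an edge-quadruple in the orbit of $\gamma(\{\{e_1,e_2\},\{e_3,e_4\}\})$, producing a graph $G'\cong G$ with an isomorphism $\gamma^*$ carrying $e_c$ to the central edge $e_c'$ of the inserted $\mathcal{H}$. Because $\gamma^*$ is an isomorphism, $e_c'$ inherits the minimal value of $(x_0,\dots,x_6)$, so $G'$ is accepted.

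For the second half, suppose two accepted, isomorphic graphs $G$ and $G'$ exist, and let $t,t'$ be the canonical $\mathcal{H}$'s that were reduced. Pick an isomorphism $\gamma:G\to G'$. Since both $t$ and $t'$ are canonical (minimal $7$-tuples) and isomorphisms preserve the invariants, $\gamma(t)$ lies in the same $\Aut(G')$-orbit as $t'$, so after composing with a suitable automorphism of $G'$ we may assume $\gamma$ maps $t$ onto $t'$. This $\gamma$ then descends to an isomorphism $\gamma_p:p(G)\to p(G')$ between the reductions; since the input list contains exactly one representative per class, $p(G)$ and $p(G')$ are the same input graph, and $\gamma_p$ becomes an automorphism of that graph carrying the expanded edge-quadruple of $G$ to that of $G'$. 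Hence equivalent edge-quadruples were expanded, contradicting rule~(1) of the algorithm that only one expansion per equivalence class is performed.

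The one genuinely $\mathcal{H}$-specific point to verify, which has no counterpart in the tripod proof, is the remark in the definition of the $x_i$: that two edges (here, two candidate central edges) have identical $7$-tuples if and only if they lie in the same orbit of $\Aut(G)$. This is what licenses the step ``$\gamma(t)$ is in the same orbit as $t'$'' and the final conclusion that $\gamma^*(e_c)$ is again minimal. Once the orbit-versus-invariant equivalence is in hand, every remaining step is a formal transcription of the tripod argument, so I expect no real obstacle beyond bookkeeping the girth/deficit hypotheses of the input class correctly for the two ranges $k\in\{5,6\}$ and $k\ge 7$.
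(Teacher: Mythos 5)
Your proposal is correct and is precisely what the paper intends: the paper omits this proof, stating it is ``completely analogous'' to that of Theorem~\ref{lem:exactlyone_tripod}, and your argument is a faithful transcription of that tripod proof with the correct girth/deficit hypotheses (Theorem~\ref{theorem:reducible_h} and Lemma~\ref{lemma:reduced_h}) and the orbit-versus-$7$-tuple equivalence for central edges supplied where needed. No gaps.
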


\subsubsection{$\mathcal{H}$ operation for generating snarks}

Similar to the tripod operation, also the $\mathcal{H}$
operation allows a look-ahead which often allows to avoid the
generation of $3$-edge-colourable graphs with girth at least~$k$.

\begin{definition}
Let $\{\{e_1,e_2\},\{e_3,e_4\}\}$ be an edge-quadruple in a $3$-edge-coloured cubic graph
and $\{c_1,c_2,c_3\}= \{1,2,3\}$.

A $(c_1,c_2)$-path from $e_1$ to $e_3$ together with an edge-disjoint $(c_1,c_3)$-path from $e_2$ to $e_4$
is called a \underline{prune-pair} for $\{\{e_1,e_2\},\{e_3,e_4\}\}$.
\end{definition}

\begin{theorem}\label{theorem_h_snarks}
Given a cubic graph $G$ and a $3$-edge-colouring $c$ of $G$. 
If there is a prune-pair for an edge-quadruple $\{\{e_1,e_2\},\{e_3,e_4\}\}$,
then the graph obtained by extending $\{\{e_1,e_2\},\{e_3,e_4\}\}$ is $3$-edge-colourable.
\end{theorem}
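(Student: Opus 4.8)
The plan is to exhibit an explicit proper $3$-edge-colouring of the expanded graph $G'$, obtained by modifying the given colouring $c$ of $G$ and assigning colours to the inserted $\mathcal{H}$. Write $\{c_1,c_2,c_3\}=\{1,2,3\}$ and recall that the $\mathcal{H}$ operation subdivides $e_1,e_2,e_3,e_4$ at new vertices $a_1,a_2,a_3,a_4$ and adds a central edge $u$--$w$ with $u$ joined to $a_1,a_2$ and $w$ joined to $a_3,a_4$. Since each path of the prune-pair starts at $e_1$, respectively $e_2$, both $e_1$ and $e_2$ are coloured $c_1$, while $e_3$ lies in $\{c_1,c_2\}$ and $e_4$ in $\{c_1,c_3\}$. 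First I would colour the central edge $u$--$w$ with $c_1$, the legs $u$--$a_1$ and $w$--$a_3$ with $c_3$, and the legs $u$--$a_2$ and $w$--$a_4$ with $c_2$. This is proper at $u$ and $w$, and it is designed so that at $a_1,a_3$ the two halves of $e_1,e_3$ may use only the colours $\{c_1,c_2\}$ of $P_1$, while at $a_2,a_4$ the halves of $e_2,e_4$ may use only the colours $\{c_1,c_3\}$ of $P_2$.

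Next I would repair the colouring near the four subdivision points. The $(c_1,c_2)$-path $P_1$ from $e_1$ to $e_3$ lies on a single $(c_1,c_2)$-coloured cycle of $G$, and the $c_3$-coloured legs attached at $a_1$ and $a_3$ play exactly the role of the third-colour edge inserted in Theorem~\ref{theorem_add_edge_cycle}. Hence swapping $c_1\leftrightarrow c_2$ along $P_1$ makes both subdivisions at $a_1$ and $a_3$ proper while leaving the outer halves of $e_1,e_3$ and the remainder of the cycle consistent, precisely as in the recolouring used to establish Theorem~\ref{theorem_add_edge_cycle}. Symmetrically, swapping $c_1\leftrightarrow c_3$ along $P_2$ repairs $a_2$ and $a_4$. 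Every edge lying neither on $P_1\cup P_2$ nor incident to the $\mathcal{H}$ retains its colour under $c$.

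The hard part will be checking that these two Kempe swaps are compatible, since both involve the colour $c_1$: an edge common to the two paths, or a vertex at which both swaps recolour an edge to $c_1$, would create a monochromatic pair. This is where the hypotheses do the work. Edge-disjointness of $P_1$ and $P_2$ guarantees that no edge is recoloured twice, so each edge of $G'$ receives a well-defined colour. For the vertex condition I would exploit the cubic structure: at each vertex there is a unique incident edge of colour $c_1$, so any alternating path that uses colour $c_1$ at a vertex $v$ must traverse that single edge. Consequently $P_1$ and $P_2$ cannot both use $c_1$ at a shared vertex without sharing the edge, contradicting edge-disjointness; the only remaining coincidences involve the endpoint edges $e_1,\dots,e_4$ themselves, and these are excluded by their known colours (for instance, the two $c_1$-coloured edges $e_1,e_2$ cannot meet at a common vertex). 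Once these finitely many local cases are dispatched, every vertex of $G'$ sees three distinct colours, so $G'$ is $3$-edge-colourable.
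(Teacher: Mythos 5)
Your proposal is correct and takes essentially the same route as the paper's proof: starting from $c$, perform Kempe-chain colour interchanges along the two paths of the prune-pair between the subdivision vertices, colour each pair of legs of the $\mathcal{H}$ with the colour missing from the corresponding path, and give the central edge the common colour $c_1$. Your explicit check that the two swaps are compatible (edge-disjointness makes the recolouring well-defined, and the unique $c_1$-edge at each vertex of the cubic graph rules out a shared vertex where both swaps produce colour $c_1$) is exactly the verification the paper leaves implicit in its assertion that the result is a proper $3$-edge-colouring.
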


\begin{proof}

Suppose we apply the $\mathcal{H}$ expansion operation to
$\{\{e_1,e_2\},\{e_3,e_4\}\}$ and call the obtained graph $G'$. We can
obtain a proper $3$-edge-colouring $c'$ of $G'$ by first starting from
the colouring $c$ of $G$ and performing a Kempe-chain-like colour
interchange of colours along the two paths of the prune-pair (see Figure~\ref{fig:recolouring_h}).

Finally we colour the edges of the inserted $\mathcal{H}$ which are
connected to $e_1$ and $e_3$ with the colour not present in the path
connecting them, and analogously with $e_2$ and $e_4$.
The central edge can then be coloured with the colour that occurred in both paths.
 The result is a proper
$3$-edge-colouring of $G'$.

\end{proof}

\begin{figure}[h!t]
    \centering
    \subfloat[]{
    \label{fig:recolour-h-a}
    \includegraphics[width=0.45\textwidth]{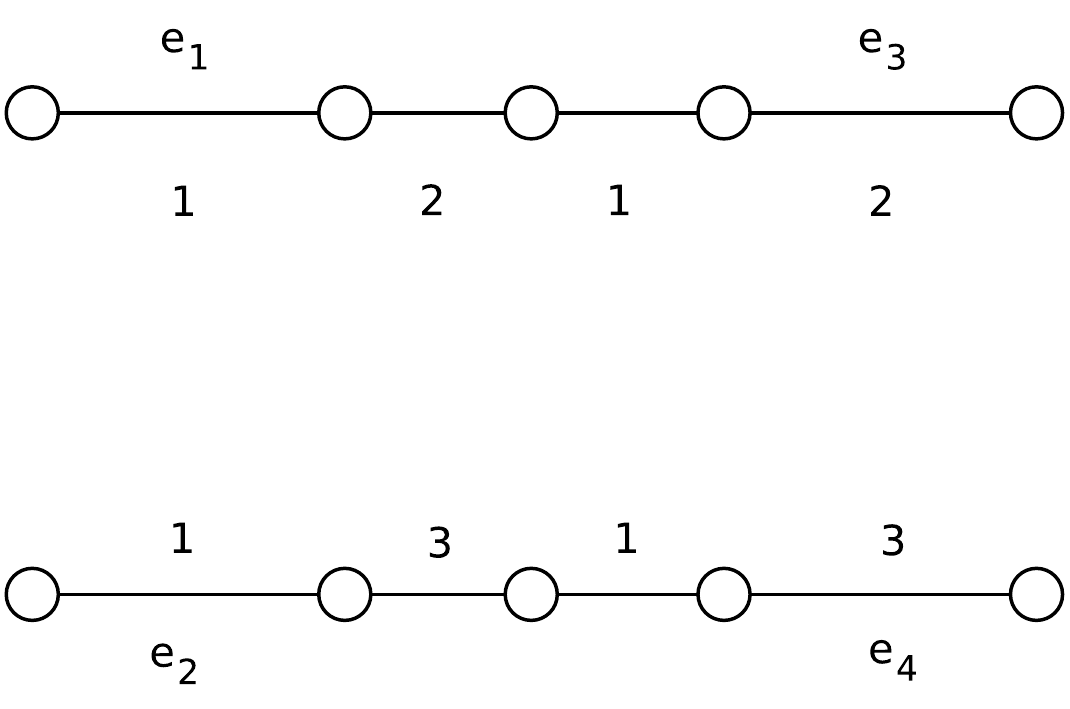}
    }\qquad
    \subfloat[]{
    \label{fig:recolour-h-b}
    \includegraphics[width=0.45\textwidth]{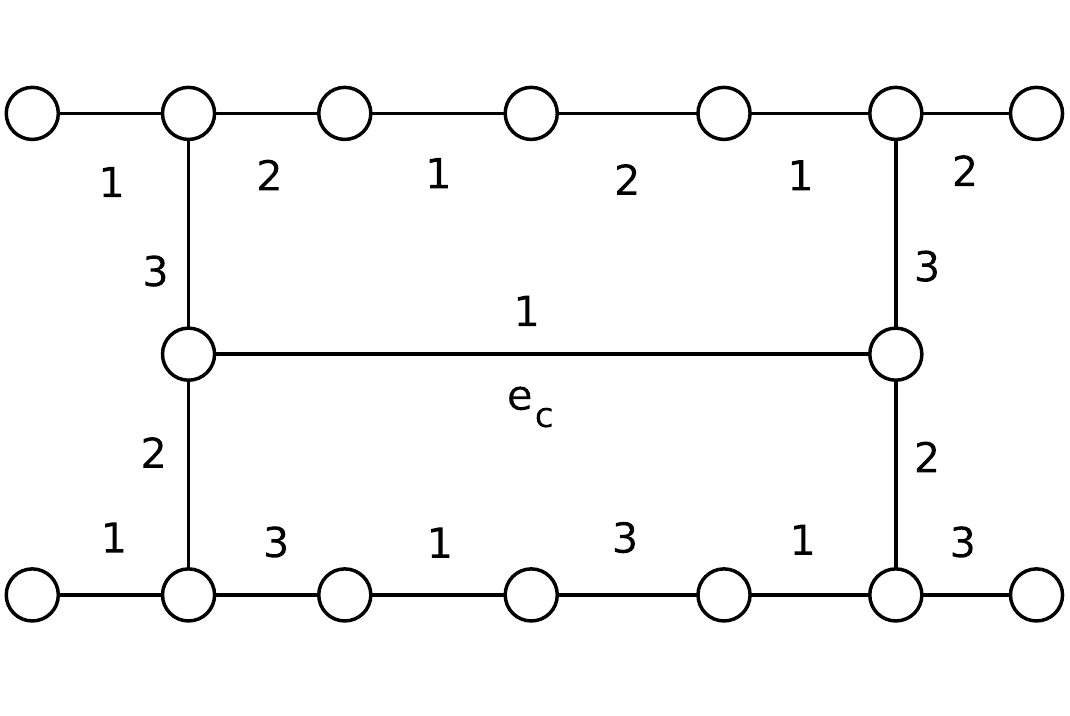}
    }    
    \caption{Procedure to recolour a graph after applying the $\mathcal{H}$ insertion operation to the edge-quadruple $\{\{e_1,e_2\},\{e_3,e_4\}\}$ used in the proof of Theorem~\ref{theorem_h_snarks}.}
    \label{fig:recolouring_h}
\end{figure}

We apply this look-ahead as follows for generating snarks with $n$
vertices and girth at least $k$. When a cubic graph $G$ with $n-6$
vertices is generated, we test if it is $3$-edge-colourable. If it is,
we remove all edge-quadruples which can be discarded due to
Theorem~\ref{theorem_h_snarks}. This allows to discard about $47$\% of
the edge-quadruples which would otherwise be expanded for $k=6$
and $n=36$. We then also compute up to five additional colourings (as
long as there are enough eligible edge-quadruples) and do the
same. These five additional colourings raise the total percentage of
discarded edge-quadruples to about $93$\% for $k=6$ and $n=36$.

The cost for computing a seventh colouring turned out to be higher
than the gain achieved by the additionally discarded edge-quadruples.

\section{Results}

\label{section:results}

\subsection{Number of cubic graphs and running times}

\label{subsect:running_times}

We implemented the tripod algorithm from Section~\ref{subsect:tripod}
for generating connected cubic graphs with girth at least $k$ for
$k=5,6$ and $7$ and the $\mathcal{H}$ algorithm from
Section~\ref{subsect:h-oper} for $k=6$ and $7$.

We will refer to our implementation of the tripod algorithm as
\textit{snarkhunter-tripod} and to our implementation of the
$\mathcal{H}$ algorithm as \textit{snarkhunter-H}, since we
incorporated it in the existing generator for cubic graphs
from~\cite{brinkmann_11} called \textit{snarkhunter}.

In Tables~\ref{table:times_cubic_g5}-\ref{table:times_cubic_g7} we
give the counts of connected cubic graphs with girth at least $k$ and
compare our new generators with the fastest existing generators for
cubic graphs, i.e.: \textit{minibaum} (which was developed by the
first author~\cite{brinkmann_96}), genreg (which was developed by
Meringer~\cite{meringer_99}) and \textit{snarkhunter} (which we
developed jointly with
McKay~\cite{brinkmann_11}). \textit{Snarkhunter} was only designed for
$k\in \{3,4,5\}$. To make the distinction between this generator and
\textit{snarkhunter-tripod} and \textit{snarkhunter-h}, we will refer
to it as \textit{snarkhunter-old}.

All running times reported in
Tables~\ref{table:times_cubic_g5}-\ref{table:times_cubic_g7} were
obtained by executing the generators on an Intel Xeon CPU E5-268 CPU at
2.50GHz and include writing the generated graphs to a null device. The
running times bigger than 3600 seconds include a small overhead due to
parallelisation.

As can be seen from Table~\ref{table:times_cubic_g5},
\textit{snarkhunter-tripod} is significantly faster than
\textit{minibaum} and \textit{genreg} for generating cubic graphs with
girth at least $5$, and is also faster -- though not by such a large factor --
than \textit{snarkhunter-old}.

As can be seen from Table~\ref{table:times_cubic_g6}, both
\textit{snarkhunter-tripod} and \textit{snarkhunter-h} are about
30 times faster than \textit{minibaum} and \textit{genreg} for
generating cubic graphs with girth at least $6$ and $34$ vertices and
the speedup seems to be increasing. For large orders,
\textit{snarkhunter-h} is a bit faster than
\textit{snarkhunter-tripod}.

As can be seen from Table~\ref{table:times_cubic_g7}, both
\textit{snarkhunter-tripod} and \textit{snarkhunter-h} are
a lot faster than \textit{minibaum} and \textit{genreg} for
generating cubic graphs with girth at least $7$ and the speedup seems
to be increasing very fast. For all orders which are within reach of
the algorithm, \textit{snarkhunter-h} is significantly faster than
\textit{snarkhunter-tripod}.

Our algorithms would also work for larger lower bounds $k$ on the
girth, but they are likely to become less efficient for practical
purposes for very large $k$. Note that for small vertex numbers
and $k\le 7$ the number of cubic graphs with
girth at least $k$ grows a lot faster than the number of cubic graphs
with girth at least $k-1$ (see
e.g.\ Tables~\ref{table:times_cubic_g5}-\ref{table:times_cubic_g7}). As
the algorithms described in this paper construct cubic graphs with
girth at least $k$ from smaller cubic graphs with girth at least
$k-1$, the running times of our generators grow a lot slower than the
running times of generators such as \textit{genreg} and
\textit{minibaum}. So for every $k$, the tripod algorithm or
$\mathcal{H}$ algorithm will most likely 
become faster than \textit{genreg} and \textit{minibaum} for
generating cubic graphs with girth at least $k$ from a certain number
of vertices on. However we estimate that already for $k=8$ the point
where our algorithms will become faster than \textit{genreg} and
\textit{minibaum} will most likely not be within the range which is
currently computationally feasible.

Tripod extension and $\mathcal{H}$ extension can be seen as the insertion of small trees.
For larger girth it might be useful to insert larger trees, but this will go together with a serious
increase of complexity of the algorithms.

In each case where a speedup is reported in
Tables~\ref{table:times_cubic_g5}-\ref{table:times_cubic_g7}, this
means that these graph counts have been independently confirmed by the
corresponding generators. Since all results were in complete
agreement, we believe that this provides strong evidence for the
correctness of our implementations and results.

\begin{table}
\centering
\small

	\begin{tabular}{|c || r | c | r | c | c | c |}
		\hline
		\multirow{2}{*}{Order} & \multirow{2}{*}{\# graphs} & \multirow{2}{*}{growth} & \multirow{2}{*}{sh-tripod (s)} & mb (s) / & gr (s) /  & sh-old (s) / \\
		 & & & & sh-tripod (s) & sh-tripod (s) & sh-tripod (s)  \\		
		\hline
22  &  90 938  &    &  0.7  &  8.14  &  7.33  &  1.57 \\
24  &  1 620 479  &  17.82  &  10.3  &  9.22  &  9.36  &  1.75 \\
26  &  31 478 584  &  19.43  &  195.3  &  9.26  &  10.44  &  1.67 \\
28  &  656 783 890  &  20.86  &  4 538  &  8.29  &  10.53  &  1.51 \\
30  &  14 621 871 204  &  22.26  &  117 048  &  7.43  &  9.53  &  1.43 \\
32  &  345 975 648 562  &  23.66  &  2 797 926  &    &    &  1.35 \\
		\hline
	\end{tabular}


\caption{Counts and generation times for connected cubic graphs with girth at least 5. \textit{Sh-tripod} stands for \textit{snarkhunter-tripod}, \textit{mb} for \textit{minibaum}, \textit{gr} for \textit{genreg} and \textit{sh-old} for \textit{snarkhunter-old}.}
\label{table:times_cubic_g5}

\end{table}

\begin{table}
\centering
\small

	\begin{tabular}{|c || r | c | r | c | c | c |}
		\hline
		\multirow{2}{*}{Order} & \multirow{2}{*}{\# graphs} & \multirow{2}{*}{growth} & \multirow{2}{*}{sh-h (s)} & mb (s) / & gr (s) /  & sh-tripod (s) / \\
		 & & & & sh-h (s) & sh-h (s) & sh-h (s)  \\		
		\hline
24  &  7 574  &    &  0.2  &  16.00  &  10.30  &  1.00 \\
26  &  181 227  &  23.93  &  3.4  &  20.21  &  13.41  &  1.00 \\
28  &  4 624 501  &  25.52  &  64  &  25.30  &  17.17  &  1.03 \\
30  &  122 090 544  &  26.40  &  1 528.5  &  31.53  &  18.71  &  1.06 \\
32  &  3 328 929 954  &  27.27  &  36 579  &  33.21  &  22.86  &  1.08 \\
34  &  93 990 692 595  &  28.23  &  862 375  &  34.86  &  31.78  &  1.18 \\
36  &  2 754 222 605 376  &  29.30  &  22 403 689  &    &    &  1.30 \\
		\hline
	\end{tabular}


\caption{Counts and generation times for connected cubic graphs with girth at least 6. \textit{Sh-h} stands for \textit{snarkhunter-h}, \textit{sh-tripod} stands for \textit{snarkhunter-tripod}, \textit{mb} for \textit{minibaum} and \textit{gr} for \textit{genreg}.}
\label{table:times_cubic_g6}

\end{table}

\begin{table}
\centering
\small

	\begin{tabular}{|c || r | c | r | c | c | c |}
		\hline
		\multirow{2}{*}{Order} & \multirow{2}{*}{\# graphs} & \multirow{2}{*}{growth} & \multirow{2}{*}{sh-h (s)} & mb (s) / & gr (s) /  & sh-tripod (s) / \\
		 & & & & sh-h (s) & sh-h (s) & sh-h (s)  \\		
		\hline
30  &  546  &    &  0.4  &  17.00  &  7.03  &  11.00 \\
32  &  30 368  &  55.62  &  7  &  38.61  &  10.48  &  11.03 \\
34  &  1 782 840  &  58.71  &  186.1  &  83.68  &  20.25  &  9.02 \\
36  &  95 079 083  &  53.33  &  6 968  &  126.95  &  26.94  &  7.41 \\
38  &  4 686 063 120  &  49.29  &  214 412  &  &  43.76  &  5.33 \\
40  &  220 323 447 962  &  47.02  &  7 049 445  &    &    &  4.72 \\
42  &  10 090 653 722 861  &  45.80  &  383 939 689  &    &    &   \\
		\hline
	\end{tabular}


\caption{Counts and generation times for connected cubic graphs with girth at least $7$. \textit{Sh-h} stands for \textit{snarkhunter-h}, \textit{sh-tripod} stands for \textit{snarkhunter-tripod}, \textit{mb} for \textit{minibaum} and \textit{gr} for \textit{genreg}.}
\label{table:times_cubic_g7}

\end{table}

We incorporated the tripod and $\mathcal{H}$ algorithm into
\textit{snarkhunter-old} such that this program can now be used to
generate all connected cubic graphs with girth at least $k$ for $3 \le
k \le 7$. The latest version of this generator can be downloaded
from~\cite{snarkhunter-site}. Several of the lists of cubic graphs from Tables~\ref{table:times_cubic_g5}-\ref{table:times_cubic_g7} can be
downloaded from the \textit{House of Graphs}~\cite{hog} at
\url{http://hog.grinvin.org/Cubic}~.

\subsection{New lists of snarks and their properties}

\label{subsect:results_snarks}

\subsubsection{Numbers of snarks}

We also incorporated the look-aheads described in
Section~\ref{section:generation_algorithm} into the tripod and
$\mathcal{H}$ algorithm so that snarks with girth at least $k$ can be
generated more efficiently. This is significantly faster than just
generating all cubic graphs with girth at least $k$ and applying a
filter for colourability at the end (which was for
$k > 5$ up until now the only
way to generate complete lists of snarks with girth at least $k$).

More specifically, for the tripod algorithm this is about $2$ times
faster than the filter approach for $k=5$ and about $1.5$ times faster
than the filter approach for $k=6$. For the $\mathcal{H}$ algorithm
this is about $2$ times faster than the filter approach for $k=6$ and about
$20$\% faster than the filter approach for $k=7$. For higher girth there is more time required for generating a graph, so here the ratio for testing if a graph is colourable is smaller.

The numbers of snarks are shown in
Table~\ref{table:number_of_snarks}. Previously the complete list of
all snarks up to $36$ vertices was known and it was also known that
there are no snarks with girth at least $7$ up to at least 38 vertices.

Using the new algorithms we were able to generate all snarks with
girth at least $6$ up to $38$ vertices. This took about $15$ CPU
years. 

As already mentioned in Section~\ref{section:intro}, the smallest
snark with girth $7$ is currently unknown. Using the new algorithms we
generated all snarks with girth at least $7$ up to $42$ vertices
(which took about $12$ CPU years). As this did not yield any snarks,
this shows that the smallest snark of girth $7$ must have at least $44$
vertices.

The tripod algorithm is faster than \textit{snarkhunter-old} for
generating snarks (i.e.\ girth at least $5$), but as the number of
snarks grows very fast, it is still computationally infeasible to
generate all snarks with $38$ vertices. However we did generate a
sample of snarks with $38$ vertices and a sample of snarks with girth
at least $6$ with $40$ vertices. These numbers are indicated with a
'$\ge$' in Table~\ref{table:number_of_snarks}. We estimate that it
would take about $450$ CPU years to generate all snarks with girth at
least $6$ with $40$ vertices on the computers used here.

We used the tripod algorithm to generate all snarks up to $34$ vertices
and this confirmed the results in~\cite{snark-paper}.

The complete list of snarks with girth at least $6$ and $38$ vertices
were independently obtained by the tripod and $\mathcal{H}$
algorithm. However, as there are only $39$ such graphs, this does not
provide very strong evidence for the correctness of these
programs. Therefore we actually generated all connected cubic graphs
with girth at least $6$ which are not $3$-edge-colourable (so not just
the cyclically $4$-edge-connected ones). Both algorithms yielded the
same $26~365$ graphs.

Also both the tripod algorithm and the $\mathcal{H}$ algorithm
independently confirmed that there are no snarks with girth at least
$7$ up to $40$ vertices. The computation for $42$ vertices was only
performed using the $\mathcal{H}$ algorithm. Since all results were in
complete agreement, we believe that this provides strong evidence for
the correctness of our implementations and results.

All snarks from Table~\ref{table:number_of_snarks} can be downloaded
from the \textit{House of Graphs}~\cite{hog} at \url{http://hog.grinvin.org/Snarks}~.
The adjacency lists from the
snarks with $38$ vertices and girth $6$  can be inspected in the database
of interesting graphs at the \textit{House of Graphs} by
searching for the keywords ``snark * girth 6''.

\begin{table}
\centering
\small

	\begin{tabular}{|c || r | c | c | r | c | c |}
		\hline
		Order & \# snarks & $g \geq 6$ & $g \geq 7$ & $\lambda_c \geq 5$ &  $\lambda_c \geq 5$ and $g \geq 6$ & $\lambda_c \geq 6$\\
		\hline
		10 	& 1     & 0 				& 0 		& 1 & 0 & 0 \\
		12,14,16 & 0 	  & 0		         & 0		& 0 & 0 & 0 \\
18  &  2  &  0  &  0  &  0  &  0  &  0 \\
20  &  6  &  0  &  0  &  1  &  0  &  0 \\
22  &  20  &  0  &  0  &  2  &  0  &  0 \\
24  &  38  &  0  &  0  &  2  &  0  &  0 \\
26  &  280  &  0  &  0  &  10  &  0  &  0 \\
28  &  2 900  &  1  &  0  &  75  &  1  &  1 \\
30  &  28 399  &  1  &  0  &  509  &  1  &  0 \\
32  &  293 059  &  0  &  0  &  2 953  &  0  &  0 \\
34  &  3 833 587  &  0  &  0  &  19 935  &  0  &  0 \\
36  &  60 167 732  &  1  &  0  &  180 612  &  1  &  1 \\
38  &  $\geq$ 19 775 768  &  39  &  0  &  $\geq$ 35 429  &  0  &  0 \\
40  &  ?  &  $\geq 25$  &  0  &  ?  &  $\geq 0$  &  ? \\
42  &  ?  &  ?  &  0  &  ?  &  ?  &  ? \\
		
		\hline
	\end{tabular}


\caption{The number of snarks. The columns with a header of the form $g \geq a$ contain the number of snarks with girth at least $a$ and those of the form $\lambda_c \geq b$ contain the number of snarks with cyclic edge-connectivity at least $b$.}
\label{table:number_of_snarks}

\end{table}

\subsubsection{Properties of the new snarks}

In this section we present an analysis of the new lists of snarks. None of the snarks with girth at least $6$ and $38$ vertices is
cyclically $5$-edge-connected.

The \textit{dot product operation} which was originally introduced by Isaacs in~\cite{isaacs_75} allows to construct infinitely many snarks. When this operation is applied to two snarks $G_1$ and $G_2$, the result is a snark with $|V(G_1)| + |V(G_2)| - 2$ vertices.

We implemented a program which performs the dot product operation and applied it to the complete lists of snarks. This resulted in the following observations.

\begin{observation}
All of the 39 snarks with girth 6 and 38 vertices can be obtained by applying the dot product to two copies of the flower snark $J_5$.
\end{observation}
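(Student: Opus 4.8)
The statement concerns a finite, explicitly known collection of graphs, so the plan is to establish it by an exhaustive but finite computation rather than by a structural argument. First I would assemble the data already in hand: the generation algorithm (shown to be correct and complete earlier) gives the full list $\mathcal{S}$ of the $39$ snarks of girth $6$ on $38$ vertices, and the flower snark $J_5$ has $4\cdot 5 = 20$ vertices, so any dot product $J_5 \cdot J_5$ is a cubic graph on $20 + 20 - 2 = 38$ vertices --- consistent with the orders occurring in $\mathcal{S}$.

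Next I would enumerate every inequivalent dot product of two copies of $J_5$. The dot product is not a single graph: it depends on a choice of the edges (and incident vertices) removed from each copy together with the pattern by which the freed half-edges are reconnected, as in Isaacs' construction~\cite{isaacs_75}. Because $J_5$ has only $30$ edges and a small automorphism group $\Aut(J_5)$, there are only finitely many --- and in practice very few --- orbits of such choices. I would run the dot-product program over a complete set of orbit representatives, form each resulting graph, and record its canonical form (computed with \emph{nauty}) together with its girth. By Isaacs' theorem each $J_5 \cdot J_5$ is automatically cyclically $4$-edge-connected and uncolourable, so the only property still to be imposed is girth at least $6$; I therefore retain exactly those outputs whose girth equals $6$.

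Finally I would compare canonical forms: the claim follows once every member of $\mathcal{S}$ appears among the girth-$6$ graphs produced, i.e.\ the $39$ canonical forms of $\mathcal{S}$ are all contained in the set of canonical forms of the enumerated dot products. The step I expect to require the most care is not mathematical but organizational --- guaranteeing that the orbit enumeration of the removal and reconnection choices is genuinely complete, so that no admissible $J_5 \cdot J_5$ configuration is silently skipped, and that the isomorphism test against $\mathcal{S}$ is exact. A convenient internal consistency check is to confirm that, after filtering to girth $6$, the produced graphs fall into exactly $39$ isomorphism classes and that these coincide with $\mathcal{S}$; agreement then proves the observation.
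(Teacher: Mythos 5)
Your proposal is correct and matches the paper's approach: the paper likewise proves this observation purely computationally, by implementing a program for Isaacs' dot product, applying it exhaustively, and comparing the results against the complete generated list of $39$ snarks via isomorphism checking. The only (immaterial) difference is scope --- the paper runs the dot product over all pairs from its complete lists of snarks of the appropriate orders (which also yields its subsequent observations for $40$, $42$, $44$ and $46$ vertices), whereas you restrict to pairs of copies of $J_5$, which indeed suffices for this particular statement.
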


\begin{observation}
There are no snarks with girth at least 6 and 40 or 42 vertices which can be obtained by applying the dot product to a pair of snarks.
\end{observation}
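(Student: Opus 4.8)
The plan is to reduce the statement to a finite explicit computation over the complete lists of snarks already available. Recall that the dot product of two snarks $G_1$ and $G_2$ is again a snark on $|V(G_1)|+|V(G_2)|-2$ vertices, and that the smallest snark is the Petersen graph on $10$ vertices, so both factors have order at least $10$. Hence a dot-product snark on $40$ vertices has two factors whose orders sum to $42$ and each lie between $10$ and $32$, while a dot-product snark on $42$ vertices has factors whose orders sum to $44$ and each lie between $10$ and $34$. Since snarks occur only at the even orders $10,18,20,22,24,\dots$, the admissible unordered pairs of factor orders are $\{10,32\},\{18,24\},\{20,22\}$ in the $40$-vertex case and $\{10,34\},\{18,26\},\{20,24\},\{22,22\}$ in the $42$-vertex case.

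Every factor order appearing here is at most $34$, and by Table~\ref{table:number_of_snarks} complete lists of all snarks up to order $36$ are known; thus each admissible factor is an explicitly available graph and the set of candidate dot products is finite. I would iterate over all pairs of snarks of the admissible orders and, for each pair, form every dot product: this means choosing an unordered pair of \emph{independent} edges of $G_1$, an edge (equivalently a pair of adjacent vertices) of $G_2$, and a way of joining the four degree-two vertices created in $G_1$ to the four created in $G_2$ so that the result is cubic, ranging over all choices inequivalent under $\Aut(G_1)\times\Aut(G_2)$. Because the dot product of snarks is automatically a snark, the only property left to test on each resulting cubic graph is its girth; the claim is exactly that none of these graphs has girth at least $6$.

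To make the search tractable I would exploit a necessary condition for the product to have girth at least $6$. Any pentagon of $G_1$ that avoids both deleted edges survives unchanged in the product, and any pentagon of $G_2$ that avoids both deleted vertices survives as well. Hence a choice can yield a girth-$6$ product only if the two deleted edges of $G_1$ together meet every pentagon of $G_1$, and the deleted adjacent pair of $G_2$ meets every pentagon of $G_2$. This pentagon-covering test prunes the vast majority of edge and vertex selections before any dot product is constructed, and discards entire factors outright.

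The obstacle here is not mathematical but computational: the pair $\{10,34\}$ alone pairs the Petersen graph with all $3\,833\,587$ snarks of order $34$, each over many inequivalent edge choices, so a naive enumeration is enormous. The two points requiring care are (i) enumerating the dot-product constructions \emph{completely} -- every independent edge pair in $G_1$, every edge of $G_2$, and every admissible cross-joining must be covered, since an omitted case could conceal a counterexample -- while suppressing isomorphic duplicates, and (ii) proving that the pentagon-covering pruning is sound, so that it rejects only selections that provably cannot produce girth $6$. Once these are settled, the statement follows from the observation that, after filtering, no girth-$6$ dot product of order $40$ or $42$ occurs.
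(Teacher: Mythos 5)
Your proposal is correct and takes essentially the same route as the paper: the authors simply implemented the dot product operation and applied it to the complete lists of snarks (every admissible factor here has at most $34$ vertices, so all factors are fully known), checking the girth of each resulting graph; your order arithmetic and pentagon-covering pruning are sound refinements of that same finite computation. One caution for completeness: the dot product is asymmetric (independent edges are deleted from one factor, two adjacent vertices from the other), so each unordered pair of snarks must be processed in both orientations, not only with the edges taken from $G_1$ as your description literally states.
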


\begin{observation}\label{obs:dotproduct44}
There are 4 snarks with girth at least 6 and 44 vertices which can be obtained by applying the dot product to a pair of snarks.
\end{observation}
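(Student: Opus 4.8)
The plan is to verify the statement by an exhaustive computation, exactly as announced in the text that precedes the observation. The structural remark that makes the search finite is the following. Since applying the dot product to a pair of snarks $G_1,G_2$ produces a snark on $|V(G_1)|+|V(G_2)|-2$ vertices, a $44$-vertex result forces $|V(G_1)|+|V(G_2)|=46$. The smallest snark is the Petersen graph on $10$ vertices, so each building block has order at most $36$, and there are no snarks of order $12$, $14$ or $16$ (see Table~\ref{table:number_of_snarks}). Hence the only admissible pairs of orders are $\{10,36\}$, $\{18,28\}$, $\{20,26\}$ and $\{22,24\}$, and for each of these orders a complete list of all snarks is already available from~\cite{snark-paper}.

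A point worth stressing is that one cannot shortcut the search by restricting to girth-$6$ building blocks. Within the known range, girth-$\ge 6$ snarks occur only at orders $28$, $30$, $36$ and $38$ (Table~\ref{table:number_of_snarks}), and no two of these orders sum to $46$. Consequently every girth-$\ge 6$ snark on $44$ vertices that arises as a dot product must be built from at least one girth-$5$ snark, so the girth-$6$ property is \emph{created} by the cut rather than inherited. The full search therefore has to range over \emph{all} snarks of the four admissible order-splits, girth-$5$ ones included.

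Concretely, for each of the four splits I would loop over all unordered pairs $(G_1,G_2)$ from the complete snark lists and form the dot product of Isaacs~\cite{isaacs_75} in every inequivalent way: ranging over orbit representatives of the local configurations at which the graphs are cut (a pair of independent edges in one graph and an edge in the other) under the respective automorphism groups, together with the finitely many admissible reconnection patterns of the resulting semi-edges. By Isaacs' theorem each such graph is again a snark, so no colourability or cyclic-connectivity test is needed here. I would then discard every output of girth smaller than $6$ and reduce the survivors modulo isomorphism, using a canonical form (for example via \emph{nauty}) to remove duplicates arising from different pairs, different orbit representatives, or the different reconnection patterns. The assertion is precisely that exactly $4$ pairwise non-isomorphic graphs remain.

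The main obstacle is one of scale and of guaranteeing completeness of the enumeration, not of mathematical depth. The split $\{10,36\}$ alone pairs the Petersen graph with each of the roughly $6\times 10^{7}$ snarks on $36$ vertices, so the dot-product step must be run on the order of $10^{8}$ times, multiplied by the number of orbit and reconnection choices; iterating over automorphism orbits rather than over all edge configurations is essential both for efficiency and to avoid missing cases. I expect the overwhelming majority of these products to retain a $5$-cycle (one lying in $G_1$ and avoiding the deleted edges, or one in $G_2$ avoiding the two deleted vertices) and hence to be filtered out at the girth test. Correctness therefore rests entirely on the completeness of the underlying snark lists up to $36$ vertices and on treating \emph{every} inequivalent dot product of \emph{every} admissible pair, including all reconnection patterns.
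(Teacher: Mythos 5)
Your proposal is correct and takes essentially the same approach as the paper: the observation is established by an exhaustive computation that applies the dot product in all inequivalent ways to the complete lists of snarks (known up to $36$ vertices, which covers every order split summing to $46$) and filters the results for girth at least $6$ up to isomorphism. Your supplementary remarks -- the four admissible order splits and the fact that the girth-$6$ property must be created by the cut rather than inherited -- are sound and consistent with the paper's finding that all four resulting snarks involve the flower snark $J_5$ as one of the two factors.
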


For 46 vertices, we applied the dot product to every pair of snarks $\{G_1,G_2\}$ which results in a snark on 46 vertices, except for the pairs where a snark with 38 vertices is combined with the Petersen graph as not all snarks with 38 vertices are known.

\begin{observation}\label{obs:dotproduct46}
There are at least 876 snarks with girth at least 6 and 46 vertices which can be obtained by applying the dot product to a pair of snarks.
\end{observation}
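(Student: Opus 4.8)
The plan is to certify the lower bound by a direct but carefully organised computation, using the fact recalled above (due to Isaacs~\cite{isaacs_75}) that every dot product of two snarks $G_1,G_2$ is itself a snark on $|V(G_1)|+|V(G_2)|-2$ vertices. To land on $46$ vertices we need $|V(G_1)|+|V(G_2)|=48$, and since snarks exist only for orders $10,18,20,22,24,\dots$, the only admissible unordered pairs of orders are $(18,30)$, $(20,28)$, $(22,26)$, $(24,24)$ and $(10,38)$. For the first four pairs the complete lists of snarks are available, since all orders involved are at most $36$ and are tabulated in Table~\ref{table:number_of_snarks}, whereas for order $38$ only a sample is known. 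Because the Petersen graph is the unique snark on $10$ vertices, every pair involving a $38$-vertex snark is of the form $(10,38)$; this is exactly the family we must omit, and it is the reason the statement asserts only ``at least $876$'' rather than an exact count.

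First I would implement the dot product as a procedure that, for a fixed pair $\{G_1,G_2\}$, enumerates all inequivalent ways of forming the product. The construction depends on the choice of which edges of each factor are cut and on how the resulting dangling half-edges of the two pieces are reconnected, and it is asymmetric in the two factors; the enumeration therefore ranges over both roles of $G_1$ and $G_2$ and over all such choices, which I would reduce modulo $\Aut(G_1)$ and $\Aut(G_2)$ to avoid generating the same graph many times. Running this over every pair of snarks drawn from the complete lists of the four admissible orders produces a finite multiset of snarks on $46$ vertices.

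Next I would apply two filters. Since every graph produced is automatically a snark by the property of the dot product recalled above, it remains only to discard those of girth $5$ and retain those of girth at least $6$. I would then perform isomorphism rejection on the survivors using the canonical form computed by \textit{nauty}~\cite{nauty-website, mckay_14} and count the pairwise non-isomorphic graphs; the computation returns $876$ of them. Because the omitted $(10,38)$ pairs can only add further examples, the number of dot-product-realisable girth-$6$ snarks on $46$ vertices is at least $876$.

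The main obstacle is the completeness and correctness of the dot-product enumeration in the first step: one must guarantee that, for each pair of factors, every graph arising as a dot product is produced at least once, which requires treating the edge choices and the half-edge reconnections together with the automorphism groups of $G_1$ and $G_2$ exhaustively. By contrast the girth test and the isomorphism rejection are routine given \textit{nauty}, and the restriction to the four admissible order-pairs makes the search space finite and manageable.
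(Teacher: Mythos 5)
Your proposal matches the paper's own approach essentially exactly: the authors also implemented the dot product, applied it to every pair of snarks from the complete lists whose orders sum to $48$, excluded precisely the (Petersen, $38$-vertex) pairs because the list of $38$-vertex snarks is incomplete, and counted the non-isomorphic girth-$\geq 6$ results to obtain the lower bound $876$. Your additional details (explicit enumeration of the admissible order pairs, reduction modulo automorphisms, isomorphism rejection via \textit{nauty}) are consistent with, and a reasonable elaboration of, what the paper describes.
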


All of the snarks from Observation~\ref{obs:dotproduct44} and~\ref{obs:dotproduct46} were obtained by applying the dot product operation to pairs of snarks where the flower snark $J_5$ is one of the two snarks.

A graph $G$ is \textit{hypohamiltonian} if $G$ is not hamiltonian but $G-v$ is hamiltonian for every vertex $v$ in $G$.

\begin{observation}
There are exactly 29 hypohamiltonian snarks with girth at least 6 and 38 vertices.
\end{observation}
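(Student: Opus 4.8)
The plan is to verify the statement by direct computation on the complete list of $39$ snarks with girth at least $6$ and $38$ vertices produced earlier in the paper. Since hypohamiltonicity has two parts — non-hamiltonicity of $G$ together with hamiltonicity of every $G-v$ — I would first dispose of the first part once and for all. Recall that every cubic graph has an even number of vertices, so a hamiltonian cycle in such a graph is an even cycle and hence properly $2$-edge-colourable; colouring the complementary perfect matching with a third colour then yields a proper $3$-edge-colouring. Thus every hamiltonian cubic graph is $3$-edge-colourable, and since a snark is by definition uncolourable, no snark is hamiltonian. This shows that the first condition in the definition of hypohamiltonicity holds automatically for all $39$ graphs, so the property reduces to testing, for each snark $G$, whether $G-v$ is hamiltonian for every vertex $v$.

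Next I would, for each of the $39$ snarks and for each vertex $v$, construct $G-v$ (a graph on $37$ vertices in which exactly the three neighbours of $v$ now have degree $2$) and run an exact hamiltonian-cycle test, for instance a standard backtracking search; the three forced degree-$2$ vertices give useful pruning, since any hamiltonian cycle must use both edges incident to each of them. To cut down the work I would compute $\Aut(G)$ with \textit{nauty} and test only one vertex from each orbit, because $G-v$ and $G-v'$ are isomorphic whenever $v,v'$ lie in the same orbit. A snark is then declared hypohamiltonian precisely when all of these deleted-vertex graphs are hamiltonian; conversely, a single vertex $v$ for which $G-v$ is non-hamiltonian certifies that $G$ is not hypohamiltonian. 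Tallying the snarks that pass the test should give the asserted count of $29$.

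The computational obstacle here is essentially negligible: deciding hamiltonicity on graphs with $37$ or $38$ vertices is fast in practice, and at most $39\cdot 38$ such tests (far fewer after the orbit reduction) are needed, so the whole verification runs in a negligible amount of time. The only genuine concern is the correctness of the hamiltonicity routine and of the underlying list of snarks, rather than any difficulty of the computation itself. This I would address by re-running the hypohamiltonicity test with an independent implementation and by relying on the independent confirmation of the snark list already described above. Because the input list is complete and the test ranges exhaustively over all vertex deletions, the resulting count of $29$ is exact rather than merely a lower bound.
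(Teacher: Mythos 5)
Your proposal is correct and matches what the paper does: the observation is a computational result obtained by running a hypohamiltonicity test (using the independently tested programs from~\cite{snark-paper}) over the complete list of $39$ snarks with girth at least $6$ and $38$ vertices, exactly as you describe. Your additional remarks -- that non-hamiltonicity is automatic for snarks since a hamiltonian cubic graph is $3$-edge-colourable, and that orbit reduction via $\Aut(G)$ shrinks the number of vertex-deleted subgraphs to test -- are correct refinements of the same verification, not a different route.
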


In the remainder we report how we used our new lists of snarks to test several conjectures, but it did not yield any counterexamples. For more information about the conjectures, we refer to~\cite{snark-paper}. We tested these conjectures using independently tested programs which were already used in~\cite{snark-paper}.

A \emph{dominating} cycle is a cycle $C$ such that every edge in $G$ has an endpoint on $C$. Fleischner~\cite{fleischner1988some} made the following conjecture related to dominating cycles:

\begin{conjecture}[Fleischner~\cite{fleischner1988some}]\label{conj:doms}
Every snark has a dominating cycle. 
\end{conjecture}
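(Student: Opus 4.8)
Fleischner's conjecture (Conjecture~\ref{conj:doms}) is a longstanding open problem, so a complete proof is not within reach here; in keeping with the computational aim of this paper, the plan is to verify it exhaustively on the lists of snarks produced by the tripod and $\mathcal{H}$ algorithms, namely all snarks up to $36$ vertices and all snarks with girth at least $6$ up to $38$ vertices. The first step is to recast the defining property into a form that can be tested directly on a graph. A cycle $C$ is dominating exactly when the vertices outside $C$ form an independent set, since an edge fails to have an endpoint on $C$ precisely when both of its endpoints lie off $C$. For a cubic graph this means that every vertex not on $C$ must have all three of its neighbours on $C$; equivalently, $G$ has a dominating cycle if and only if it has a cycle $C$ for which $G[V(G)\setminus V(C)]$ has no edges.

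With this reformulation I would, for each snark $G$ in our lists, run a backtracking search for such a cycle: extend a partial cycle one vertex at a time, pruning as soon as the set of vertices that can no longer be placed on the cycle contains an adjacent pair (which would force an undominated edge), and report success as soon as a closed dominating cycle is completed. For the orders involved here (at most $38$ vertices) this search is entirely feasible even though the underlying decision problem is hard in general, and a single witnessing cycle per graph suffices, so the search can terminate at the first success. I would carry this out with the independently tested programs already employed in~\cite{snark-paper} to guard against implementation errors.

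The main obstacle is of course that no amount of finite computation can establish the conjecture in full: checking all snarks up to a fixed order yields only supporting evidence, and the expected outcome is that every snark in our lists turns out to possess a dominating cycle. A genuine proof would have to proceed structurally, for instance by attempting to show that a dominating cycle survives, or can be rebuilt, under reductions such as the dot product that generate snarks from smaller ones. The difficulty, and the reason the conjecture remains open, is that snarks admit no recursive decomposition under which the existence of a dominating cycle is visibly preserved, so such an inductive argument does not readily close; the present contribution is therefore verification on the newly available lists rather than a proof.
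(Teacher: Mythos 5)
Your proposal matches the paper's treatment of this statement: Conjecture~\ref{conj:doms} is an open conjecture that the paper does not (and could not) prove; it is only verified computationally on the newly generated lists of snarks --- all snarks with girth at least $6$ up to $38$ vertices, plus the sampled lists --- using the independently tested programs from~\cite{snark-paper}, which is exactly the route you describe. Your reformulation (a cycle is dominating if and only if the vertices off the cycle form an independent set) is correct and a reasonable basis for such a test, going slightly beyond the implementation details the paper itself gives.
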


\begin{observation}
Conjecture~\ref{conj:doms} holds for all snarks with girth 6 with at most 38 vertices.
\end{observation}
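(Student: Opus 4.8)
The plan is to establish this observation by a finite exhaustive check, since the statement ranges over a completely known finite family of graphs. First I would invoke the completeness guarantees of Section~\ref{section:generation_algorithm}: by Theorem~\ref{lem:exactlyone_tripod} and its $\mathcal{H}$-analogue, the generators produce exactly one representative of every isomorphism class of snarks with girth at least $6$ on at most $38$ vertices. From Table~\ref{table:number_of_snarks} this family is tiny---one snark each on $28$, $30$ and $36$ vertices together with the $39$ snarks on $38$ vertices, so $42$ graphs in all. It therefore suffices to confirm that each of these graphs carries a dominating cycle, which verifies Conjecture~\ref{conj:doms} on the stated range.

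The key step is to reformulate the dominating-cycle condition into something directly testable. A cycle $C$ dominates $G$ precisely when every edge has an endpoint on $C$, equivalently when the set $V(G)\setminus V(C)$ of vertices missed by $C$ is independent; since $G$ is cubic, this says that each off-cycle vertex has all three of its neighbours on $C$. I would then run, for each snark $G$ in the list, a backtracking search for a cycle whose complementary vertex set induces no edge, pruning any partial cycle as soon as the independence requirement on the excluded vertices is violated. Because every graph has at most $38$ vertices and maximum degree $3$, this search is entirely feasible and terminates almost immediately; finding a dominating cycle in each of the $42$ graphs then completes the proof.

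The hard part is not the search itself but the soundness of its two inputs. The conclusion is only as strong as the claim that no snark with girth at least $6$ on at most $38$ vertices has been omitted, and this completeness rests on the canonical-construction-path arguments of Sections~\ref{subsect:tripod} and~\ref{subsect:h-oper}; I would lean on the fact, reported in Section~\ref{subsect:results_snarks}, that the tripod and $\mathcal{H}$ generators agree independently as corroboration. The only remaining concern is the correctness of the dominating-cycle test, which I would guard against by cross-checking the implementation against the independently tested programs of~\cite{snark-paper} already used for the other conjectures. Given completeness of the list, the verification is routine and carries no genuine mathematical obstacle.
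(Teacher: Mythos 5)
Your proposal is correct and matches the paper's approach: the observation is established by exhaustive computational verification of the dominating-cycle property on the complete list of snarks with girth at least~$6$ up to $38$ vertices (the $42$ graphs produced by the generators whose completeness is guaranteed by the canonical-construction-path theorems and corroborated by the independent agreement of the tripod and $\mathcal{H}$ algorithms), using independently tested programs as in~\cite{snark-paper}. Your added detail on how to implement the cycle search (independence of the off-cycle vertices, backtracking with pruning) is a reasonable concrete realisation of the same check, not a different route.
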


Recall that a \textit{$k$-factor} of a graph $G$ is a spanning $k$-regular subgraph of $G$. The \emph{oddness} of a bridgeless cubic graph is the minimum number of odd order  components in any 2-factor of the graph. The oddness of a cubic graph provides a measure for how far a the graph is from being colourable, as a graph has oddness 0 if and only if it is colourable.

Together with H{\"a}gglund and Markstr{\"o}m we determined in~\cite{snark-paper} that all snarks with at most 36 vertices have oddness 2 and asked for the smallest snark with oddness greater than 2 (i.e. Problem~2 in~\cite{snark-paper}). Recently Lukot'ka et al.~\cite{lukotka2015small} have constructed a snark with oddness 4 and 44 vertices and they conjectured that it is the smallest snark with oddness~4.

\begin{observation}
All snarks with girth at least 6 and at most 38 vertices have oddness 2.
\end{observation}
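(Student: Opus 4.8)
The plan is to split the claim into an automatic lower bound and a computational upper bound, leaning on the complete list of snarks with girth at least $6$ and at most $38$ vertices produced earlier in the paper. According to Table~\ref{table:number_of_snarks} there are exactly $1+1+1+39 = 42$ of them, occurring on $28$, $30$, $36$ and $38$ vertices.

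First I would record two elementary facts that together force the oddness of any snark to be at least $2$. Since every cubic graph has an even number of vertices and any $2$-factor partitions all $n$ vertices into cycles, the number of odd cycles in a $2$-factor is congruent to $n \pmod 2$ and is therefore always even; hence the oddness is even. Moreover a bridgeless cubic graph is $3$-edge-colourable precisely when it admits a $2$-factor consisting only of even cycles, i.e.\ when its oddness is $0$: given a proper $3$-edge-colouring, the union of any two colour classes is a $2$-factor whose cycles alternate between those two colours and are thus even, and conversely an all-even $2$-factor can be $2$-coloured alternately while the complementary perfect matching receives the third colour. As a snark is by definition uncolourable, its oddness is nonzero and even, so at least $2$.

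It then remains to establish the matching upper bound, and here the observation reduces to a finite verification: for each of the $42$ snarks it suffices to exhibit a single $2$-factor with exactly two odd cycles. Concretely I would, for each such graph $G$, search its perfect matchings (equivalently, the $2$-factors $G-M$), count odd components, and halt as soon as a $2$-factor with two odd cycles is found; by the previous paragraph such a $2$-factor certifies that the oddness equals $2$. Because only one good $2$-factor per graph is needed -- not an exhaustive minimisation over all $2$-factors -- the search is cheap on graphs of at most $38$ vertices.

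The only genuine obstacle is not mathematical but one of trust in the inputs: the argument is only as strong as the claimed completeness of the list of girth-$6$ snarks up to $38$ vertices. I would therefore rely on the independent confirmation already described, namely that the tripod and $\mathcal{H}$ algorithms produce the same $39$ snarks on $38$ vertices and agree with the earlier counts on $28$, $30$ and $36$ vertices. Granting the correctness of those lists, the oddness computation carries no hidden difficulty, and the observation follows.
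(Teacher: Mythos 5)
Your proposal is correct and takes essentially the same route as the paper: the observation rests on a finite computational check of the complete lists (the $1+1+1+39 = 42$ snarks with girth at least $6$ on $28$, $30$, $36$ and $38$ vertices), carried out with independently tested oddness programs, together with the trust-in-completeness caveat you raise. Your explicit lower-bound argument (oddness is even, and nonzero since snarks are uncolourable) and the resulting shortcut of only exhibiting one $2$-factor with two odd cycles per graph are standard facts the paper uses implicitly, not a genuinely different method.
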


Next to oddness, another measure for how far a graph is from being colourable is the notion of \textit{strong snarks} which was introduced by Jaeger in~\cite{jaeger1985survey}. A snark $G$ is \textit{strong} if for every edge $e$ of $G$ the application of the edge reduction operation from Figure~\ref{fig:edgeinsert_old} results in an uncolourable graph. Celmins~\cite{celmins1985cubic} proved that a minimum counterexample to the cycle double cover conjecture must be a strong snark.

In~\cite{snark-paper} we determined with H{\"a}gglund and Markstr{\"o}m that there are no strong snarks with less than 34 vertices, exactly 7 strong snarks with 34 vertices and 25 strong snarks with 36 vertices.

\begin{observation}
None of the snarks with girth at least 6 and at most 38 vertices is strong.
\end{observation}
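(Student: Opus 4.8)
The plan is to reduce the claim to a small finite verification, leveraging the earlier generation results together with the known classification of strong snarks. By the counts in Table~\ref{table:number_of_snarks} there are exactly $42$ snarks with girth at least $6$ and at most $38$ vertices: one each on $28$, $30$ and $36$ vertices, and $39$ on $38$ vertices (there are none on $32$ or $34$ vertices). Recall that a snark $G$ is \emph{strong} precisely when, for \emph{every} edge $e$, the edge reduction (the inverse of the edge insertion of Figure~\ref{fig:edgeinsert_old}) applied to $e$ yields an uncolourable graph. Hence to prove that a given snark is \emph{not} strong it suffices to exhibit a single edge whose reduction is $3$-edge-colourable, and I would organise the argument by vertex count.

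First I would dispose of the small orders using results already available. The girth-$6$ snarks on $28$ and $30$ vertices are immediately not strong, since it was established in~\cite{snark-paper} that there are no strong snarks with fewer than $34$ vertices. For the unique girth-$6$ snark on $36$ vertices it is enough to check that it does not occur among the $25$ strong snarks on $36$ vertices determined in~\cite{snark-paper}; this is a single membership test against an already-computed set rather than a new search.

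This leaves only the $39$ snarks on $38$ vertices, for which a fresh computation is required. For each such snark $G$ and each edge $e=\{x,y\}$ (with $x,y$ of degree $3$) I would perform the edge reduction -- delete $e$ and suppress the two resulting degree-$2$ vertices, possibly producing a multigraph -- and then run a $3$-edge-colourability test, done by standard backtracking or, equivalently, by verifying that the chromatic index equals $3$. Since $G$ is itself uncolourable, the objective is to locate for each $G$ at least one edge whose reduced graph \emph{is} colourable, which certifies that $G$ is not strong.

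The argument is not mathematically deep; the main obstacle is simply trusting the computation. This concern is handled exactly as elsewhere in the paper: the colourability and reduction routines are the independently tested programs already validated in~\cite{snark-paper}, and the completeness of the $38$-vertex input list is underpinned by its independent confirmation by both the tripod and the $\mathcal{H}$ algorithm. Since the total workload is tiny ($42$ small graphs, each with a modest number of edges), no efficiency issues arise and the verification is routine.
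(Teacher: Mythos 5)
Your proposal is correct and matches the paper's approach: the observation is established purely by computational verification, applying the edge reduction to every edge of each snark in the generated lists and testing $3$-edge-colourability with the independently tested programs from~\cite{snark-paper}. Your shortcut of citing the known classification of strong snarks on fewer than $38$ vertices (none below $34$, and the $7$ and $25$ strong snarks on $34$ and $36$ vertices) instead of recomputing those cases is a minor, valid refinement of the same finite check.
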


We also applied the edge insertion operation from Figure~\ref{fig:edgeinsert_old} in all possible ways to the complete list of snarks with 36 vertices and then tested which of the resulting graphs were strong snarks. This yielded the following result.

\begin{observation}
There are at least 298 strong snarks with 38 vertices.
\end{observation}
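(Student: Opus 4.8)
The plan is to establish the lower bound constructively: I will exhibit $298$ pairwise non-isomorphic strong snarks on $38$ vertices, which certifies the claim directly, independently of whether further such snarks exist. The candidates are produced by the edge insertion operation of Figure~\ref{fig:edgeinsert_old}, which subdivides two distinct edges $e,e'$ of a cubic graph and joins the two resulting degree-$2$ vertices by a new \emph{central} edge $e_c$, raising the vertex count from $36$ to $38$. Since the complete list of the $60~167~732$ snarks on $36$ vertices is available from~\cite{snark-paper}, I would run this operation over every admissible pair $\{e,e'\}$ of each such seed snark $G$, retaining only those insertions that yield a simple cubic graph $G'$.

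The reason these are promising candidates is that reducing $G'$ at its central edge $e_c$ returns $G$, which is uncolourable; hence every constructed $G'$ automatically possesses at least one uncolourable edge reduction. This does \emph{not} make $G'$ itself uncolourable (colourability is not preserved under such a two-vertex reduction), so for each $G'$ I would verify the defining properties directly: first that $G'$ is a snark, i.e.\ it has girth at least $5$, is cyclically $4$-edge-connected and has chromatic index $4$; and second that $G'$ is \emph{strong}, i.e.\ that the reduction at each of its $57$ edges produces an uncolourable graph. Testing chromatic index for all reductions is the computational workhorse, so in practice one discards a candidate as soon as a single colourable reduction is encountered. Isomorphic duplicates, which arise because the same $G'$ can be obtained from different insertions and from different seed snarks, are then removed using the canonical labellings computed by \textit{nauty}~\cite{nauty-website, mckay_14}, and counting the surviving isomorphism classes yields the $298$ graphs.

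The one point requiring care is the phrase \emph{at least}. This construction can only reach a strong $38$-vertex snark if at least one of its edge reductions is itself a snark on $36$ vertices (cyclically $4$-edge-connected and of girth at least $5$), whereas the definition of strongness merely demands that every reduction be uncolourable. A strong snark all of whose reductions are uncolourable but fail to be snarks — for instance because each reduction has a cyclic cut or a short cycle — would be invisible to this seed-based search. Consequently completeness is neither obtained nor claimed, and only a lower bound is asserted. The main obstacle is therefore not conceptual but the sheer volume of colourability and isomorphism tests generated by inserting edges into roughly sixty million seed graphs; this is handled by the same efficient edge-colouring and canonical-form routines already employed elsewhere in the paper, and the exhibited $298$ non-isomorphic strong snarks suffice to prove the statement.
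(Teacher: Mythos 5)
Your proposal is correct and takes essentially the same approach as the paper: the authors likewise applied the edge insertion operation of Figure~\ref{fig:edgeinsert_old} in all possible ways to the complete list of $60~167~732$ snarks with $36$ vertices, tested which of the resulting graphs were strong snarks, and reported the $298$ surviving isomorphism classes as a lower bound. Your supporting remarks --- that uncolourability of the seed does not transfer to the expanded graph (so snarkness and strongness must be verified directly), and that a strong snark none of whose edge reductions is itself a snark would be invisible to this seed-based search, which is exactly why only ``at least'' is claimed --- accurately capture the logic left implicit in the paper.
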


One might speculate that this is the complete set of strong snarks with 38 vertices, since applying the same procedure to all snarks with 32 and 34 vertices yields all 7 and 25 snarks with 34 and 36 vertices, respectively.

These strong snarks can also be obtained from the database of interesting graphs at the \textit{House of Graphs} by searching for the keywords ``strong snark''.

The \emph{total chromatic number} of a graph $G$ is the minimum number of colours required to colour $E(G)\cup V(G)$ such that adjacent vertices and edges have different colours and no vertex has the same colour as its incident edges. It is known~\cite{rosenfeld1971total} that for cubic graphs the total chromatic number is either 4 or 5.

Cavicchioli et al.\ asked for the smallest snark with total chromatic number 5 (if any) (i.e.\ Problem 5.1 in~\cite{cavicchioli2003special}). Jointly with H{\"a}gglund and Markstr{\"o}m, we showed in~\cite{snark-paper} that every snark up to at least 36 vertices has total chromatic number 4. 

Recently, Brinkmann, Preissmann and Sasaki~\cite{brinkmann2015snarks} gave a construction which yields \textit{weak snarks} with total chromatic number 5 for every even order $n \ge 40$. A \textit{weak snark} is a cyclically 4-edge-connected cubic graph with chromatic index 4 and girth at least 4.

\begin{observation}
All snarks with girth at least 6 and at most 38 vertices have total chromatic number 4.
\end{observation}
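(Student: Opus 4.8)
The plan is to settle this by an exhaustive computation over the complete list of snarks with girth at least $6$ and at most $38$ vertices produced by the generators of Section~\ref{section:generation_algorithm}; by Table~\ref{table:number_of_snarks} this list comprises only $42$ graphs (one each on $28$, $30$ and $36$ vertices, together with the $39$ snarks on $38$ vertices). The key reduction is the cited result of Rosenfeld~\cite{rosenfeld1971total} that the total chromatic number of a cubic graph is either $4$ or $5$. Consequently, to certify that a given snark has total chromatic number $4$ it suffices to exhibit a single proper total $4$-colouring of $E(G)\cup V(G)$, and conversely the absence of such a colouring would force the value $5$. So the statement is equivalent to the claim that each of the $42$ graphs admits a proper total $4$-colouring.

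First I would run, for each graph $G$ in the list, a backtracking search that assigns colours from $\{1,2,3,4\}$ to the vertices and edges, enforcing the three total-colouring constraints: adjacent vertices differ, edges sharing an endpoint differ, and every vertex differs from each of its three incident edges. A useful specialisation for cubic graphs is that a proper total $4$-colouring forces, at every vertex $v$, the colour of $v$ together with the colours of its three incident edges to form a permutation of $\{1,2,3,4\}$; imposing this local constraint and propagating it (forward checking, always branching on the most constrained uncoloured element first) keeps the branching factor small. On graphs with at most $38$ vertices and hence at most $57$ edges such a search terminates essentially instantly, so collecting a successful colouring for every graph in the list establishes the observation. Note that since these graphs are snarks their edges alone cannot be $3$-coloured, so the edge colours in any such total colouring necessarily use all four colours -- but this poses no obstruction to the search.

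The only genuine concern is that total $4$-colourability is an NP-complete decision problem in general, so one cannot rule out a priori that the search blows up on some instance; however the instances here are both small and few, so feasibility is not an issue in practice, and indeed finding a colouring, when one exists, is the computationally easy side of the problem. The main real risk is rather an implementation bug, since a spurious ``no colouring found'' would silently misreport a total chromatic number of $5$. To guard against this I would re-verify each returned colouring directly against the three constraints and cross-check with an independently written total-colouring routine, in keeping with the independent-confirmation methodology used elsewhere in the paper.
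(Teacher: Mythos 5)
Your proposal is correct and matches the paper's approach: the observation is established by exhaustively checking the complete generated list of snarks with girth at least $6$ up to $38$ vertices (the $42$ graphs you identify from Table~\ref{table:number_of_snarks}), using Rosenfeld's dichotomy so that exhibiting a total $4$-colouring of each graph suffices, exactly as the paper does with its independently tested colouring programs from~\cite{snark-paper}. The paper leaves the colouring-search implementation unspecified, so your backtracking details and cross-verification are just a concrete instantiation of the same computational argument.
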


In~\cite{jaeger1988nowhere} Jaeger made the following conjecture which is known as the \textit{Petersen colouring conjecture} (we refer to~\cite{jaeger1988nowhere} for more details).

\begin{conjecture}[Jaeger~\cite{jaeger1988nowhere}]\label{conj:petersen_col}
If $G$ is a bridgeless cubic graph, one can colour the edges of $G$ using the edges of the Petersen graph as colours in such a way that any three mutually adjacent edges of $G$ are coloured by three edges that are mutually adjacent in the Petersen graph.
\end{conjecture}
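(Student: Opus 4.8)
The plan is to attack Conjecture~\ref{conj:petersen_col} through the by-now-standard reduction of edge-colouring conjectures to snarks, combined with Jaeger's reformulation in terms of \emph{normal $5$-edge-colourings}. Write $P$ for the Petersen graph. The first, cheap, step is to dispose of the $3$-edge-colourable graphs: given a proper $3$-edge-colouring of $G$ with colours $\{1,2,3\}$, fix a vertex $w_0$ of $P$ and label its three incident edges $1,2,3$; mapping every colour-$i$ edge of $G$ to the colour-$i$ edge at $w_0$ is already a Petersen colouring, because the three edges at each vertex of $G$ then receive the three edges at $w_0$, which are mutually adjacent. Hence it suffices to treat uncolourable graphs, and a minimal counterexample must in particular be uncolourable.

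Next I would show that a minimal counterexample must be a snark of high girth and high cyclic connectivity, by proving the usual configurations reducible. Bridges are excluded by hypothesis. For a nontrivial $2$- or $3$-edge-cut one splits $G$ along the cut into two smaller bridgeless cubic graphs (capping each side by a new edge, respectively a new vertex joined to the three cut edges), colours them by minimality, and glues the colourings using the \emph{edge-transitivity} of $P$ to align the colours crossing the cut; this is where I expect the bookkeeping to be delicate but routine. Short cycles are reducible as well: for a triangle $xyz$, the triangle-freeness of $P$ forces the images of the three triangle edges to be the three edges at a single vertex of $P$, which in turn forces the three edges leaving the triangle to receive exactly those three colours, so contracting the triangle to a single vertex yields a smaller instance whose Petersen colouring lifts back; quadrilaterals succumb to an analogous but heavier case analysis. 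After these reductions only cyclically $5$-edge-connected snarks of girth at least $5$ remain.

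For the remaining class I would pass to Jaeger's equivalence: $G$ has a Petersen colouring if and only if $G$ has a \emph{normal $5$-edge-colouring}, that is, a proper $5$-edge-colouring in which every edge is either \emph{poor} (its four neighbouring edges use only two colours) or \emph{rich} (its four neighbouring edges use four distinct colours). Note this is compatible with the first step, since in a proper $3$-edge-colouring every edge is automatically poor. The goal would then be to construct such a colouring for the irreducible snarks, for instance by starting from the best known upper bound on the normal chromatic index of bridgeless cubic graphs (currently $7$) and attempting to eliminate two colours by Kempe-type recolourings along colour cycles, exploiting the high girth to control the interaction of these chains.

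The hard part, and the reason the conjecture is still open, is precisely this last step. There is no known reduction for cyclically $5$-edge-connected snarks of large girth (the Petersen graph itself being the smallest such graph, coloured trivially by the identity), and no general procedure is known that brings a normal $7$-edge-colouring down to $5$ colours; moreover, since a Petersen colouring would simultaneously settle the Berge--Fulkerson and cycle double cover conjectures, any genuine proof must be at least as powerful as resolving those. A real argument must therefore contribute a new idea here, for example a discharging argument on a minimal counterexample that genuinely uses the girth lower bound, or a structural theorem forcing every irreducible snark to contain a colourable skeleton that extends. In this programme the lists of high-girth snarks produced in the present paper serve mainly as a source of candidate irreducible configurations and as a safeguard that no small counterexample to the hoped-for reductions exists, rather than as a substitute for the general proof.
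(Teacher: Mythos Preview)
The statement you are trying to prove is a \emph{conjecture}, not a theorem, and the paper makes no attempt to prove it. The paper merely quotes Jaeger's Petersen colouring conjecture as background and then reports a computational check (the subsequent Observation) that no counterexample exists among the snarks of girth at least $6$ on at most $38$ vertices. There is therefore no ``paper's own proof'' to compare your proposal against.

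As for your proposal itself, you correctly identify that the $3$-edge-colourable case is trivial and that standard reductions restrict attention to snarks of high girth and cyclic connectivity, and you are honest that the remaining step is exactly the open problem. But this means your proposal is not a proof: it is a programme whose crucial step (constructing a normal $5$-edge-colouring for every irreducible snark, or equivalently reducing a normal $7$-colouring to $5$ colours) is left entirely unresolved, as you yourself acknowledge in the final paragraph. A proof cannot end with ``a new idea is needed here''. The reductions you sketch for small cuts and short cycles are standard and essentially correct in outline, but they do not touch the heart of the conjecture, and the paper's contribution in this direction is purely experimental verification on finite lists, not a step toward a general argument.
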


\begin{observation}
Conjecture~\ref{conj:petersen_col} holds for all snarks with girth 6 with at most 38 vertices.
\end{observation}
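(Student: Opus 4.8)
The statement is a finite verification: by the $g\ge 6$ column of Table~\ref{table:number_of_snarks} there are exactly $42$ snarks with girth at least $6$ and at most $38$ vertices (one each on $28$, $30$ and $36$ vertices, and $39$ on $38$ vertices), and our generators have produced a complete and irredundant list of them. Completeness is what makes the claim meaningful, and it is guaranteed by Theorem~\ref{lem:exactlyone_tripod} together with its $\mathcal{H}$-analogue. So the plan is simply to exhibit, for each of these $42$ graphs, a colouring of the type demanded by Conjecture~\ref{conj:petersen_col}.

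First I would turn the colouring condition into a concrete constraint-satisfaction problem over the edges of $G$. Write $P$ for the Petersen graph. A colouring in the sense of the conjecture is a map $\phi\colon E(G)\to E(P)$ such that, at every vertex $v$ of $G$, the three incident edges receive three colours that are pairwise adjacent in $P$. Because $P$ has girth $5$ and hence no triangle, three pairwise-adjacent edges of $P$ must share a common vertex; the constraint at each $v$ therefore reads: the three colours of the edges at $v$ are exactly the three edges of some star of $P$, each used once. (Equivalently, $\phi$ induces a vertex map $V(G)\to V(P)$ that is locally a bijection of the edges at $v$ onto the edges at its image.) This reformulation is what I would hand to the solver.

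Next I would run a backtracking search with constraint propagation that assigns colours to the edges of $G$ one vertex-neighbourhood at a time: fix the star of $P$ used at a starting vertex (all choices being equivalent under $\Aut(P)$), then extend to adjacent vertices while enforcing, for every already-coloured edge, that it lies in a legal star at both of its endpoints, and backtrack as soon as a local star condition becomes unsatisfiable. Executed on each of the $42$ snarks, this search returns a valid $\phi$ in every case, which proves the observation.

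The search itself is not the difficult part: $P$ has only $15$ edges and a large automorphism group, each graph has at most $38$ vertices, and the local constraints prune aggressively, so every instance is dispatched almost instantly. The real content of the statement rests on the completeness of the underlying list, and we additionally rely on the independently tested colourability and colouring programs of~\cite{snark-paper}, so the verification inherits their reliability.
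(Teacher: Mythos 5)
Your proposal takes essentially the same approach as the paper: the observation is a finite computational fact, established by running a Petersen-colouring test on the complete generated list of snarks with girth at least $6$ up to $38$ vertices (the paper does this via the independently tested programs of~\cite{snark-paper}, whose internals it does not spell out, while you describe an equivalent per-graph check as a star-constraint satisfaction problem solved by backtracking). Your count of $42$ graphs, the triangle-free reformulation of the local condition, and your identification of list completeness as the real content are all consistent with the paper.
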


We also verified that there are no counterexamples to the above mentioned conjectures among the samples of snarks with 38 vertices and the samples of snarks with girth 6 and 40 vertices which are indicated with a '$\ge$' in Table~\ref{table:number_of_snarks}.

\section*{Acknowledgements}
Most computations for this work were carried out using the Stevin Supercomputer Infrastructure at Ghent University.

\section*{References}

\bibliographystyle{plain}
\bibliography{references}

\begin{thebibliography}{10}

\bibitem{balaban_66}
A.T. Balaban.
\newblock Valence-isomerism of cyclopolyenes.
\newblock {\em Revue Roumaine de chimie}, 11(9):1097--1116, 1966.

\bibitem{brinkmann_96}
G.~Brinkmann.
\newblock Fast generation of cubic graphs.
\newblock {\em Journal of Graph Theory}, 23(2):139--149, 1996.

\bibitem{hog}
G.~Brinkmann, K.~Coolsaet, J.~Goedgebeur, and H.~M{\'e}lot.
\newblock {House of Graphs: a database of interesting graphs}.
\newblock {\em Discrete Applied Mathematics}, 161(1-2):311--314, 2013.
\newblock Available at \url{http://hog.grinvin.org/}.

\bibitem{snarkhunter-site}
G.~Brinkmann and J.~Goedgebeur.
\newblock Homepage of snarkhunter: \url{http://caagt.ugent.be/cubic/}.

\bibitem{snark-paper}
G.~Brinkmann, J.~Goedgebeur, J.~H{\"a}gglund, and K.~Markstr{\"o}m.
\newblock Generation and properties of snarks.
\newblock {\em Journal of Combinatorial Theory, Series B}, 103(4):468--488,
  2013.

\bibitem{brinkmann_11}
G.~Brinkmann, J.~Goedgebeur, and B.D. McKay.
\newblock Generation of cubic graphs.
\newblock {\em Discrete Mathematics and Theoretical Computer Science},
  13(2):69--80, 2011.

\bibitem{brinkmann2013history}
G.~Brinkmann, J.~Goedgebeur, and N.~Van~Cleemput.
\newblock The history of the generation of cubic graphs.
\newblock {\em International Journal of Chemical Modeling}, 5:67--89, 2013.

\bibitem{brinkmann2015snarks}
G.~Brinkmann, M.~Preissmann, and D.~Sasaki.
\newblock Snarks with total chromatic number 5.
\newblock {\em Discrete Mathematics \& Theoretical Computer Science},
  17(1):369--382, 2015.

\bibitem{cavicchioli2003special}
A.~Cavicchioli, T.E. Murgolo, B.~Ruini, and F.~Spaggiari.
\newblock Special classes of snarks.
\newblock {\em Acta Applicandae Mathematica}, 76(1):57--88, 2003.

\bibitem{celmins1985cubic}
U.~Celmins.
\newblock Cubic graphs that do not have an edge 3-colouring.
\newblock {\em Dissertation Abstracts International Part B: Science and
  Engineering}, 46(5), 1985.

\bibitem{de_vries_1889}
J.~de~Vries.
\newblock Over vlakke configuraties waarin elk punt met twee lijnen incident
  is.
\newblock {\em Mededeelingen der Koninklijke Akademie voor Wetenschappen,
  Afdeeling Natuurkunde}, 3(6):382--407, 1889.

\bibitem{de_vries_1891}
J.~de~Vries.
\newblock Sur les configurations planes dont chaque point supporte deux
  droites.
\newblock {\em Rendiconti Circolo Matematico Palermo}, 5:221--226, 1891.

\bibitem{fleischner1988some}
H.~Fleischner.
\newblock Some blood, sweat, but no tears in {E}ulerian graph theory.
\newblock {\em Congressus Numerantium}, 63:8--48, 1988.

\bibitem{huck2000reducible}
A.~Huck.
\newblock Reducible configurations for the cycle double cover conjecture.
\newblock {\em Discrete Applied Mathematics}, 99(1):71--90, 2000.

\bibitem{isaacs_75}
R.~Isaacs.
\newblock {Infinite families of nontrivial trivalent graphs which are not Tait
  colorable}.
\newblock {\em American Mathematical Monthly}, 82(3):221--239, 1975.

\bibitem{jaeger1985survey}
F.~Jaeger.
\newblock A survey of the cycle double cover conjecture.
\newblock {\em North-Holland Mathematics Studies}, 115:1--12, 1985.

\bibitem{jaeger1988nowhere}
F.~Jaeger.
\newblock Nowhere-zero flow problems.
\newblock {\em Selected topics in graph theory}, 3:71--95, 1988.

\bibitem{JS1980}
F.~Jaeger and T.~Swart.
\newblock Conjecture 1.
\newblock In M.~Deza and I.~G. Rosenberg, editors, {\em Combinatorics 79,
  {P}art {II}, Annals of Discrete Mathematics}, volume~9, page 305, 1980.

\bibitem{kochol1996snarks}
M.~Kochol.
\newblock Snarks without small cycles.
\newblock {\em Journal of Combinatorial Theory, Series B}, 67(1):34--47, 1996.

\bibitem{kroto_85}
H.W. Kroto, J.R. Heath, S.C. O'Brien, R.F. Curl, and R.E. Smalley.
\newblock {$C_{60}$}: Buckminsterfullerene.
\newblock {\em Nature}, 318(6042):162--163, 1985.

\bibitem{lukotka2015small}
R.~Lukot'ka, E.~M{\'a}{\v{c}}ajov{\'a}, J.~Maz{\'a}k, and M.~{\v{S}}koviera.
\newblock Small snarks with large oddness.
\newblock {\em The Electronic Journal of Combinatorics}, 22(1):20, 2015.

\bibitem{nauty-website}
B.D. McKay.
\newblock {nauty User's Guide (Version 2.5)}.
\newblock Technical Report TR-CS-90-02, Department of Computer Science,
  Australian National University. The latest version of the software is
  available at \url{http://cs.anu.edu.au/~bdm/nauty}.

\bibitem{mckay_98}
B.D. McKay.
\newblock Isomorph-free exhaustive generation.
\newblock {\em Journal of Algorithms}, 26(2):306--324, 1998.

\bibitem{mckay1998fast}
B.D. McKay, W.~Myrvold, and J.~Nadon.
\newblock Fast backtracking principles applied to find new cages.
\newblock {\em 9th Annual ACM-SIAM Symposium on Discrete Algorithms}, pages
  188--191, 1998.

\bibitem{mckay_14}
B.D. McKay and A.~Piperno.
\newblock Practical graph isomorphism, {II}.
\newblock {\em Journal of Symbolic Computation}, 60:94--112, 2014.

\bibitem{meringer_99}
M.~Meringer.
\newblock Fast generation of regular graphs and construction of cages.
\newblock {\em Journal of Graph Theory}, 30(2):137--146, 1999.

\bibitem{robinson_83}
R.W. Robinson and N.C. Wormald.
\newblock Numbers of cubic graphs.
\newblock {\em Journal of Graph Theory}, 7:463--467, 1983.

\bibitem{robinson_92}
R.W. Robinson and N.C. Wormald.
\newblock Almost all cubic graphs are hamiltonian.
\newblock {\em Random Structures \& Algorithms}, 3(2):117--125, 1992.

\bibitem{rosenfeld1971total}
M.~Rosenfeld.
\newblock On the total coloring of certain graphs.
\newblock {\em Israel Journal of Mathematics}, 9(3):396--402, 1971.

\end{thebibliography}

\section*{Appendix: Adjacency lists}

This section contains the adjacency lists of the complete list of 39 snarks with girth 6 and 38 vertices. These graphs can also be found on the \textit{House of Graphs}~\cite{hog} at \url{http://hog.grinvin.org/Snarks}~.

The graphs are represented in the following compact format. The vertices of a graph $G$ are numbered from $0$ to $|V(G)| - 1$ and each entry corresponds to the adjacency list of a graph where for a given vertex $v$, only the neighbours of $v$ which have a larger label than $v$ are listed. So the first 3 numbers are the neighbours of vertex 0, followed by the neighbours of vertex 1 which have a higher label than 1, etc.

\begin{enumerate}

 \item \{4, 8, 30, 6, 9, 14, 4, 7, 31, 7, 8, 15, 5, 19, 20, 7, 10, 9, 11, 13, 30, 13, 31, 13, 14, 16, 15, 17, 18, 34, 28, 32, 22, 24, 23, 26, 25, 27, 23, 25, 32, 23, 35, 25, 36, 29, 36, 29, 35, 29, 34, 33, 33, 33, 37, 37, 37\}

 \item \{4, 8, 30, 6, 9, 14, 4, 7, 31, 7, 8, 15, 5, 19, 20, 7, 10, 9, 11, 13, 30, 13, 31, 13, 14, 16, 15, 17, 18, 28, 28, 34, 22, 24, 26, 35, 27, 36, 23, 25, 32, 23, 27, 35, 25, 26, 36, 29, 29, 29, 33, 33, 33, 34, 37, 37, 37\}

 \item \{4, 8, 16, 6, 9, 14, 4, 7, 18, 7, 8, 15, 20, 15, 22, 28, 7, 10, 9, 11, 13, 16, 13, 18, 13, 14, 21, 15, 17, 21, 25, 19, 24, 30, 21, 26, 32, 34, 29, 32, 35, 28, 33, 27, 29, 31, 34, 31, 36, 29, 31, 35, 33, 36, 37, 37, 37\}

 \item \{4, 8, 20, 5, 9, 11, 7, 9, 12, 7, 17, 18, 5, 12, 17, 7, 11, 15, 9, 15, 16, 19, 22, 19, 13, 26, 34, 15, 18, 21, 17, 20, 19, 28, 23, 34, 24, 27, 30, 32, 25, 31, 28, 32, 31, 33, 29, 33, 29, 30, 35, 35, 36, 36, 37, 37, 37\}

 \item \{4, 8, 10, 5, 9, 11, 7, 9, 22, 5, 7, 18, 5, 12, 7, 11, 20, 9, 15, 11, 13, 24, 28, 14, 19, 15, 17, 20, 27, 29, 32, 18, 27, 21, 21, 25, 21, 23, 29, 33, 34, 30, 35, 30, 34, 28, 33, 36, 31, 29, 31, 36, 33, 35, 37, 37, 37\}

 \item \{4, 8, 10, 5, 9, 11, 7, 9, 22, 5, 7, 18, 5, 12, 7, 11, 20, 9, 15, 11, 13, 24, 28, 14, 19, 15, 17, 20, 17, 29, 34, 18, 21, 21, 25, 21, 26, 34, 25, 28, 30, 27, 31, 27, 30, 32, 32, 33, 31, 33, 35, 35, 36, 36, 37, 37, 37\}

 \item \{4, 8, 10, 5, 9, 11, 7, 9, 22, 5, 7, 18, 5, 12, 7, 11, 20, 9, 15, 11, 13, 16, 28, 14, 19, 15, 17, 20, 32, 34, 18, 26, 21, 21, 25, 21, 23, 30, 28, 33, 29, 32, 35, 27, 29, 31, 34, 31, 36, 29, 31, 35, 33, 36, 37, 37, 37\}

 \item \{4, 8, 10, 5, 9, 11, 7, 9, 22, 5, 7, 18, 5, 12, 7, 11, 20, 9, 15, 11, 13, 28, 34, 14, 19, 15, 17, 20, 24, 26, 32, 18, 24, 21, 21, 23, 21, 25, 26, 27, 34, 29, 29, 33, 30, 30, 33, 31, 32, 31, 35, 35, 36, 36, 37, 37, 37\}

 \item \{8, 10, 18, 6, 9, 12, 7, 11, 32, 5, 7, 8, 17, 20, 32, 17, 34, 10, 35, 35, 36, 11, 36, 13, 13, 15, 34, 15, 15, 24, 26, 21, 23, 28, 23, 19, 30, 20, 29, 21, 26, 23, 31, 33, 25, 33, 28, 30, 27, 29, 31, 29, 31, 33, 37, 37, 37\}

 \item \{4, 8, 10, 5, 9, 11, 7, 9, 16, 5, 7, 18, 5, 12, 7, 11, 20, 9, 15, 11, 13, 28, 34, 14, 19, 15, 17, 20, 26, 34, 18, 25, 21, 21, 22, 21, 23, 27, 29, 30, 25, 27, 31, 29, 30, 32, 32, 31, 33, 33, 35, 35, 36, 36, 37, 37, 37\}

 \item \{4, 8, 14, 5, 9, 20, 4, 11, 15, 7, 8, 12, 21, 12, 22, 7, 10, 19, 15, 9, 11, 13, 14, 13, 13, 17, 17, 17, 19, 20, 24, 28, 34, 24, 27, 28, 35, 26, 29, 27, 33, 35, 30, 30, 33, 36, 31, 32, 31, 29, 36, 31, 33, 34, 37, 37, 37\}

 \item \{4, 8, 14, 5, 9, 20, 4, 11, 15, 7, 8, 12, 21, 12, 22, 7, 10, 19, 15, 9, 11, 13, 14, 13, 13, 17, 17, 17, 19, 20, 21, 30, 34, 24, 27, 28, 25, 29, 28, 33, 35, 30, 36, 33, 36, 27, 29, 32, 31, 29, 31, 35, 33, 34, 37, 37, 37\}

 \item \{4, 20, 28, 6, 9, 29, 12, 30, 34, 7, 8, 21, 5, 11, 21, 29, 7, 20, 26, 9, 28, 30, 13, 14, 26, 15, 34, 16, 18, 18, 22, 16, 23, 17, 19, 35, 22, 35, 36, 23, 36, 24, 24, 25, 25, 27, 32, 27, 32, 31, 31, 33, 33, 33, 37, 37, 37\}

 \item \{4, 8, 14, 6, 9, 16, 4, 7, 15, 7, 8, 17, 32, 22, 26, 34, 7, 10, 9, 11, 13, 14, 13, 15, 13, 16, 33, 30, 30, 17, 19, 25, 33, 35, 24, 27, 21, 26, 32, 25, 36, 23, 28, 27, 36, 25, 34, 29, 29, 31, 35, 31, 31, 33, 37, 37, 37\}

 \item \{4, 8, 14, 6, 9, 16, 4, 7, 15, 7, 8, 17, 20, 22, 24, 34, 7, 10, 9, 11, 13, 14, 13, 15, 13, 16, 18, 30, 30, 17, 32, 25, 28, 24, 27, 32, 21, 26, 23, 25, 23, 28, 35, 25, 29, 34, 29, 35, 36, 31, 33, 33, 36, 33, 37, 37, 37\}

 \item \{4, 8, 20, 5, 9, 11, 7, 9, 12, 7, 17, 18, 5, 12, 17, 7, 11, 15, 9, 15, 16, 19, 24, 19, 21, 24, 28, 32, 15, 18, 22, 17, 20, 19, 23, 26, 34, 25, 28, 29, 34, 27, 27, 33, 30, 32, 30, 31, 31, 33, 35, 35, 36, 36, 37, 37, 37\}

 \item \{4, 8, 20, 5, 9, 11, 7, 9, 12, 7, 17, 18, 5, 12, 17, 7, 11, 15, 9, 15, 13, 16, 19, 19, 24, 26, 28, 15, 18, 22, 17, 20, 19, 27, 28, 32, 34, 23, 29, 32, 35, 25, 30, 29, 33, 31, 36, 31, 35, 29, 31, 34, 33, 36, 37, 37, 37\}

 \item \{4, 8, 20, 5, 9, 11, 7, 9, 12, 7, 17, 18, 5, 12, 17, 7, 11, 15, 9, 15, 13, 16, 19, 19, 24, 30, 34, 15, 18, 22, 17, 20, 19, 27, 24, 32, 34, 25, 31, 28, 32, 35, 29, 29, 33, 27, 30, 36, 28, 29, 31, 35, 33, 36, 37, 37, 37\}

 \item \{4, 10, 30, 6, 9, 14, 4, 11, 31, 7, 8, 15, 18, 18, 22, 26, 7, 10, 31, 9, 30, 11, 13, 13, 13, 14, 16, 15, 17, 19, 20, 19, 22, 21, 28, 23, 24, 25, 34, 23, 27, 25, 35, 28, 29, 35, 29, 34, 36, 32, 33, 33, 33, 36, 37, 37, 37\}

 \item \{4, 10, 30, 6, 9, 14, 4, 11, 31, 7, 8, 15, 18, 18, 24, 32, 7, 10, 31, 9, 30, 11, 13, 13, 13, 14, 16, 15, 17, 19, 20, 19, 22, 26, 25, 23, 34, 23, 25, 35, 23, 36, 25, 27, 29, 35, 29, 34, 29, 32, 36, 33, 33, 33, 37, 37, 37\}

 \item \{4, 8, 16, 6, 9, 14, 4, 7, 32, 7, 8, 15, 5, 15, 18, 7, 10, 9, 11, 13, 16, 13, 32, 13, 14, 20, 15, 17, 24, 26, 23, 27, 30, 33, 34, 21, 33, 22, 31, 26, 28, 30, 35, 29, 34, 27, 29, 36, 27, 29, 35, 31, 36, 33, 37, 37, 37\}

 \item \{4, 8, 16, 6, 9, 14, 4, 7, 32, 7, 8, 15, 5, 15, 18, 7, 10, 9, 11, 13, 16, 13, 32, 13, 14, 20, 15, 17, 19, 24, 19, 22, 26, 21, 28, 27, 33, 25, 34, 25, 26, 35, 25, 36, 27, 29, 31, 35, 31, 34, 31, 33, 36, 33, 37, 37, 37\}

 \item \{4, 8, 16, 6, 9, 14, 4, 7, 32, 7, 8, 15, 5, 15, 18, 7, 10, 9, 11, 13, 16, 13, 32, 13, 14, 20, 15, 17, 19, 24, 19, 22, 23, 21, 30, 28, 34, 25, 34, 26, 29, 25, 35, 29, 27, 30, 28, 35, 31, 36, 33, 33, 36, 33, 37, 37, 37\}

 \item \{4, 8, 16, 6, 9, 14, 4, 7, 32, 7, 8, 15, 5, 15, 18, 7, 10, 9, 11, 13, 16, 13, 32, 13, 14, 19, 15, 17, 19, 20, 22, 28, 23, 21, 24, 26, 28, 27, 34, 25, 34, 25, 35, 30, 27, 36, 29, 31, 31, 35, 33, 36, 33, 33, 37, 37, 37\}

 \item \{4, 10, 34, 6, 9, 32, 4, 11, 35, 5, 7, 8, 15, 18, 36, 7, 10, 35, 9, 34, 11, 13, 13, 13, 17, 28, 15, 16, 22, 36, 21, 32, 23, 33, 19, 26, 20, 22, 21, 24, 23, 25, 29, 27, 28, 27, 29, 27, 30, 31, 31, 31, 33, 33, 37, 37, 37\}

 \item \{4, 10, 14, 6, 9, 18, 4, 11, 15, 7, 8, 19, 24, 22, 24, 32, 7, 10, 15, 9, 14, 11, 13, 13, 13, 18, 20, 17, 17, 17, 28, 32, 19, 23, 30, 34, 25, 26, 34, 27, 29, 25, 27, 25, 28, 31, 31, 35, 30, 35, 33, 36, 33, 36, 37, 37, 37\}

 \item \{4, 10, 14, 6, 9, 18, 4, 11, 15, 7, 8, 19, 24, 16, 22, 24, 7, 10, 15, 9, 14, 11, 13, 13, 13, 18, 20, 17, 17, 17, 26, 19, 23, 30, 34, 25, 26, 28, 27, 34, 29, 32, 25, 32, 31, 29, 31, 30, 35, 35, 33, 36, 33, 36, 37, 37, 37\}

 \item \{4, 10, 14, 6, 9, 18, 4, 11, 15, 7, 8, 19, 5, 16, 24, 7, 10, 15, 9, 14, 11, 13, 13, 13, 18, 20, 17, 17, 17, 26, 19, 23, 25, 28, 29, 32, 34, 23, 28, 33, 30, 31, 35, 32, 35, 27, 29, 31, 36, 29, 31, 34, 33, 36, 37, 37, 37\}

 \item \{4, 10, 14, 6, 9, 18, 4, 11, 15, 7, 8, 19, 5, 16, 24, 7, 10, 15, 9, 14, 11, 13, 13, 13, 18, 34, 17, 17, 17, 21, 19, 35, 22, 25, 34, 28, 30, 29, 32, 26, 30, 32, 29, 31, 31, 33, 27, 35, 28, 33, 29, 31, 36, 36, 37, 37, 37\}

 \item \{4, 10, 14, 6, 9, 18, 4, 11, 15, 7, 8, 19, 5, 16, 28, 7, 10, 15, 9, 14, 11, 13, 13, 13, 18, 20, 17, 17, 17, 21, 19, 23, 22, 34, 26, 34, 30, 32, 25, 27, 25, 28, 30, 33, 31, 33, 29, 31, 29, 32, 35, 35, 36, 36, 37, 37, 37\}

 \item \{4, 8, 14, 6, 9, 15, 4, 7, 20, 7, 8, 16, 5, 16, 18, 7, 10, 9, 11, 13, 14, 13, 20, 13, 15, 18, 19, 22, 28, 22, 26, 30, 21, 26, 32, 21, 23, 27, 25, 34, 25, 28, 35, 27, 29, 36, 29, 31, 33, 35, 33, 36, 33, 34, 37, 37, 37\}

 \item \{4, 8, 14, 6, 9, 15, 4, 7, 20, 7, 8, 16, 5, 16, 18, 7, 10, 9, 11, 13, 14, 13, 20, 13, 15, 18, 19, 22, 17, 26, 34, 21, 24, 27, 21, 28, 23, 32, 25, 27, 26, 35, 28, 30, 29, 31, 29, 36, 33, 35, 33, 36, 33, 34, 37, 37, 37\}

 \item \{4, 8, 14, 6, 9, 15, 4, 7, 20, 7, 8, 16, 5, 16, 18, 7, 10, 9, 11, 13, 14, 13, 20, 13, 15, 18, 19, 22, 17, 28, 34, 21, 26, 34, 21, 25, 24, 29, 25, 29, 30, 27, 31, 27, 30, 32, 32, 31, 33, 33, 35, 35, 36, 36, 37, 37, 37\}

 \item \{4, 8, 14, 6, 9, 15, 4, 7, 32, 7, 8, 16, 5, 16, 18, 7, 10, 9, 11, 13, 14, 13, 32, 13, 15, 18, 19, 21, 17, 22, 24, 19, 20, 25, 34, 23, 34, 26, 30, 26, 29, 29, 31, 27, 28, 35, 31, 35, 30, 36, 36, 33, 33, 33, 37, 37, 37\}

 \item \{4, 10, 14, 6, 9, 16, 4, 11, 20, 5, 7, 8, 5, 12, 7, 10, 15, 14, 22, 11, 19, 13, 13, 13, 16, 17, 17, 18, 17, 24, 26, 23, 30, 21, 24, 27, 34, 23, 32, 28, 29, 27, 29, 35, 27, 36, 29, 31, 33, 35, 33, 36, 33, 34, 37, 37, 37\}

 \item \{4, 10, 14, 6, 9, 16, 4, 11, 20, 5, 7, 8, 5, 12, 7, 10, 15, 14, 22, 11, 21, 13, 13, 13, 16, 17, 17, 18, 17, 24, 30, 21, 26, 34, 23, 25, 25, 26, 35, 24, 29, 33, 27, 31, 31, 33, 29, 30, 32, 35, 36, 36, 33, 34, 37, 37, 37\}

 \item \{4, 10, 14, 6, 9, 16, 4, 11, 20, 5, 7, 8, 5, 12, 7, 10, 15, 14, 22, 11, 32, 13, 13, 13, 16, 17, 17, 18, 17, 28, 34, 22, 26, 30, 25, 27, 25, 26, 32, 33, 31, 33, 34, 25, 28, 35, 29, 31, 36, 29, 36, 31, 35, 33, 37, 37, 37\}

 \item \{4, 8, 10, 6, 9, 14, 4, 7, 11, 7, 8, 17, 15, 15, 20, 28, 7, 34, 9, 35, 13, 34, 13, 35, 13, 16, 36, 32, 36, 32, 18, 21, 19, 28, 24, 30, 22, 25, 23, 24, 23, 31, 26, 30, 26, 27, 27, 31, 29, 29, 29, 33, 33, 33, 37, 37, 37\}

 \item \{4, 10, 34, 6, 9, 16, 4, 11, 35, 5, 7, 8, 15, 24, 36, 7, 10, 35, 9, 34, 11, 13, 13, 17, 19, 28, 32, 16, 22, 29, 20, 36, 17, 30, 21, 24, 31, 22, 31, 21, 26, 23, 25, 25, 30, 27, 27, 27, 29, 29, 32, 33, 33, 33, 37, 37, 37\}

\end{enumerate}

\end{document}